\newtheorem{theorem}{Theorem}
\newtheorem{lemma}[theorem]{Lemma}
\newtheorem{remark}{Remark}
\DeclareMathOperator*{\argmin}{argmin}
\DeclareMathOperator*{\prox}{prox}
\title{A fast continuous time approach with time scaling for nonsmooth convex optimization}
\author{Radu Ioan Bo\c t
\footnote{Faculty of Mathematics, University of Vienna, Oskar-Morgenstern-Platz 1, 1090 Vienna, Austria, 
{email: \tt radu.bot@univie.ac.at}. Research partially supported by FWF (Austrian Science Fund), project P 34922-N.} \and
Mikhail A. Karapetyants 
\footnote{Faculty of Mathematics, University of Vienna, Oskar-Morgenstern-Platz 1, 1090 Vienna, Austria, 
{email: \tt mikhail.karapetyants@univie.ac.at.} Research supported by the Doctoral Programme \emph{Vienna Graduate School on Computational Optimization (VGSCO)} which is funded by FWF (Austrian Science Fund), project W 1260.} 
}
\begin{document}
\maketitle

\begin{abstract}
    
In a Hilbert setting we study the convergence properties of a second order in time dynamical system combining viscous and Hessian-driven damping with time scaling in relation with the minimization of a nonsmooth and convex function. The system is formulated in terms of the gradient of the Moreau envelope of the objective function with time-dependent parameter. We show fast convergence rates for the Moreau envelope and its gradient along the trajectory, and also for the velocity of the system. From here we derive fast convergence rates for the objective function along a path which is the image of the trajectory of the system through the proximal operator of the first. Moreover, we prove the weak convergence of the trajectory of the system to a global minimizer of the objective function. Finally, we provide multiple numerical examples which illustrate the theoretical results.

\smallskip
{\em Key words}: Nonsmooth convex optimization;  Damped inertial dynamics;  Hessian-driven damping; Time scaling; Moreau envelope; Proximal operator

\smallskip
{\em AMS subject classification}: 37N40, 46N10, 49M99, 65K05, 65K10, 90C25

\end{abstract} 

\section{Introduction}

Let $H$ be a real Hilbert space endowed with the scalar product $\langle \cdot, \cdot \rangle$ and norm $\| x \| = \sqrt{\langle x, x \rangle}$ for $x \in H$. In connection with the minimization problem
\[
\min_{x \in H} \Phi(x),
\]
we will study the asymptotic behaviour of the second order in time evolution equation 
\begin{equation}\label{Syst}
\ddot x(t) + \frac{\alpha}{t} \dot x(t) + \beta(t) \frac{d}{dt} \nabla \Phi_{\lambda(t)}(x(t)) + b(t) \nabla \Phi_{\lambda(t)}(x(t)) = 0,
\end{equation}
with initial conditions $x(t_0) = x_0 \in H$, $\dot x(t_0) = u_0 \in H$, where  $\alpha \geq 1 $, $t_0 >0$, and $\beta: [t_0, +\infty) \longrightarrow [0, +\infty)$ and $ b,  \lambda: [t_0, +\infty) \longrightarrow (0, +\infty)  $ are differentiable functions. 

We assume that $\Phi: H \longrightarrow \overline{\mathbb{R}} = \mathbb{R} \cup \{\pm \infty\} $ is a proper, convex and lower semicontinuous function and denote by $\Phi_{\lambda}: H \longrightarrow \mathbb{R} $ its Moreau envelope of parameter $\lambda > 0$. In addition, we assume that $\argmin \Phi$,  the set of global minimizers of $\Phi$, is not empty and denote by $\Phi^*$ the optimal objective value of $\Phi$.

Our aim is to derive rates of convergence for the Moreau envelope of the objective function and the objective function itself to $\Phi^*$, as well as for the gradient of the Moreau envelope of the objective function and the velocity of the trajectory to zero in terms of the Moreau parameter function $\lambda$ and the time scaling function $b$. In addition, we will provide a setting which also guarantees the weak convergence of the trajectory of the dynamical system to a minimizer of $\Phi$. The theoretical results will be illustrated by multiple numerical experiments.

\subsection{Historical remarks}

Inertial dynamics were introduced by Polyak in \cite{P} in form of the so-called heavy ball with friction method
\[
\ddot x(t) + \alpha \dot x(t) + \nabla \Phi(x(t)) = 0,
\]
with fixed viscous coefficient $\alpha >0$, in order to accelerate the gradient method for the minimization of a continuous differentiable function $\Phi: H \to \mathbb{R}$. This system was later studied by Alvarez-Attouch \cite{AA_0, AA} and by Attouch-Goudou-Redont \cite{AGR}. For a convex function $\Phi$ an asymptotic convergence rate of $\Phi(x(t))$ to $\Phi^*$ of order $O\left( \frac{1}{t} \right)$ as $t \to +\infty$, as well as an improvement for a strongly convex function $\Phi$ to an exponential rate of convergence were proved. The weak convergence of the trajectories to a minimizer of $\Phi$ was also established.

A major step to obtain faster asymptotic convergence in the convex regime was done by Su-Boyd-Candes \cite{SBC}, by considering in the second order dynamical system an asymptotic vanishing damping coefficient
\begin{equation}\label{CN}
\ddot x(t) + \frac{\alpha}{t} \dot x(t) + \nabla \Phi(x(t)) = 0,
\end{equation}
for $t \geq t_0$ and $\alpha \geq 3$. Second order dynamical systems with variable and vanishing damping coefficients for optimization were studied, for instance, in \cite{BC, CEG, CEG_1}. The system \eqref{CN} corresponds to a continuous version of Nesterov's accelerated gradient method  \cite{N}. For the function values, rates of convergence of
\[
\Phi(x(t)) - \Phi^* = O\left( \frac{1}{t^2} \right) \ \text{as} \ t \to +\infty
\]
were obtained. For $\alpha >3$, in \cite{ACPR}  it was shown that the trajectory of \eqref{CN} converges weakly to an element of $\argmin \Phi$, and in \cite{AP, M} the asymptotic convergence rate of the function values was improved to $o\left( \frac{1}{t^2} \right)$ as $t \to +\infty$.

The following system which combines asymptotic vanishing damping with Hessian-driven damping was proposed by Attouch-Peypouquet-Redont in \cite{APR}
\begin{equation}\label{00}
\ddot x(t) + \frac{\alpha}{t} \dot x(t) + \beta \nabla^2 \Phi(x(t)) \dot x(t) + \nabla \Phi(x(t)) = 0
\end{equation}
for $t \geq t_0 $, where $\Phi: H \longrightarrow \mathbb{R}$ twice continuously differentiable and convex, $\alpha \geq 3$ and $\beta \geq 0$. Hessian-driven damping has a natural link with Newton’s method and gives rise to dynamical inertial Newton systems \cite{AABR}. The system \eqref{00} preserves the convergence properties of \eqref{CN}, while having for $\beta >0$
other important features, namely, 
$$\lim_{t \rightarrow +\infty} \|\nabla \Phi(x(t))\| =0 \ \mbox{and} \ \int_{t_0}^{+\infty} t^2 \|\nabla \Phi(x(t))\|^2dt < +\infty.$$
In addition, possible oscillations exhibited by the solutions of  \eqref{CN} are neutralized by \eqref{00}. 

\subsection{Time scaling}

Time scaling of the dynamical system \eqref{CN} was used in order to accelerate the rate of convergence of the values of the function $\Phi$ along the trajectory. The system \eqref{CN} becomes through time scaling a dynamical system of the form
\begin{equation}\label{timescaling}
\ddot x(t) + \frac{\alpha}{t} \dot x(t) + b(t) \nabla \Phi(x(t)) = 0,
\end{equation}
where $\alpha \geq 3$ and $b : [t_0, +\infty) \longrightarrow (0, +\infty)$ is a continuous scalar function, as it was introduced and studied by Attouch-Chbani-Riahi in \cite{ACR}. For \eqref{timescaling} it was shown that
\[
\Phi(x(t)) - \Phi^* = O\left( \frac{1}{t^2 b(t)} \right) \text{ as } t \to +\infty, 
\]
a convergence rate which can be improved to $o\left( \frac{1}{t^2b(t)} \right)$ as $t \to +\infty$, if $\alpha >3$.

In \cite{ACFR} (see also \cite{ABCR}) the dynamical system
\begin{equation}\label{h}
\ddot x(t) + \frac{\alpha}{t} \dot x(t) + \beta(t) \nabla^2 \Phi(x(t)) \dot x(t) + b(t) \nabla \Phi(x(t)) = 0,
\end{equation}
which combines viscous and Hessian-driven damping with time scaling, where $\alpha \geq 1$ and $\beta, b : [t_0, +\infty) \longrightarrow (0, +\infty)$ 
are functions with appropriate differentiability properties, was investigated. A quite general setting formulated in terms of the dynamical system parameter functions was identified in which the properties of \eqref{h}  concerning the convergence of the function values are preserved, while the gradient of $\Phi$ strongly converges along the trajectory to zero and the trajectory converges weakly to a minimizer of the objective function. In \cite{ACFR} a numerical algorithm obtained via time discretization of \eqref{h} was also introduced, exhibiting analogous convergence properties to the dynamical system.

\subsection{Nonsmooth optimization}

The Moreau envelope of a proper, convex and lower semicontinuous function $\Phi : H \to \overline{\mathbb{R}}$ has played a significant role in the literature when designing continuous-time approaches and numerical algorithms for the minimization of $\Phi$. This is defined as 
\begin{equation*}
\Phi_\lambda: H \to \mathbb{R}, \quad \Phi_{\lambda} (x) \ = \ \inf_{y \in H} \left\{ \Phi(y) + \frac{1}{2 \lambda} \| x - y \|^2 \right\},
\end{equation*}
where $\lambda > 0$ is called the parameter of the Moreau envelope (see, for instance, \cite{BC_book}). For every $\lambda >0$,  the functions $\Phi$ and $\Phi_{\lambda}$ share the same optimal objective value and the same set of minimizers. In addition, $\Phi_\lambda$ is convex and continuously differentiable with
\begin{align}\label{Morprox}
\nabla \Phi_{\lambda} (x) = \frac{1}{\lambda} ( x - \prox\nolimits_{\lambda \Phi} (x)) \quad \forall x \in H,
\end{align}
and $\nabla \Phi_\lambda$ is $\frac{1}{\lambda}$-Lipschitz continuous. Here,
$$\prox\nolimits_{\lambda \Phi}: H \to H, \quad \prox\nolimits_{\lambda \Phi} (x) = \argmin_{y \in H} \left\{ \Phi(y) + \frac{1}{2 \lambda} \| x - y \|^2 \right\},$$
denotes the proximal operator of $\Phi$ of parameter $\lambda$. For every $x \in H$ and $\lambda, \mu >0$ we have
\begin{equation}\label{proxineq}
\|\prox\nolimits_{\lambda \Phi}(x) - \prox\nolimits_{\mu \Phi}(x) \| \leq |\lambda  - \mu| \|\nabla \Phi_{\lambda} (x)\|.
\end{equation}

On the other hand, for every $x \in H$, the function $\lambda \in (0, +\infty) \to \Phi_\lambda (x)$ is nonincreasing  and differentiable, namely, 
\[
\frac{d}{d \lambda} \Phi_{\lambda} (x) = -\frac{1}{2} \| \nabla \Phi_{\lambda} (x) \|^2 \quad \forall \lambda >0.
\]
Attouch-Cabot considered in \cite{AC} (see also \cite{AP0} for a more general approach for monotone inclusions) in connection with the minimization of the proper, convex and lower semicontinuous function $\Phi : H \to \overline{\mathbb{R}}$ the following second order differential equation 
\begin{equation}\label{NSyst}
\ddot x(t) + \frac{\alpha}{t} \dot x(t) + \nabla \Phi_{\lambda(t)} (x(t)) = 0
\end{equation}
for $t \geq t_0$, where $\alpha \geq 1$ and $\lambda: [t_0, +\infty) \longrightarrow (0, +\infty)$ is continuously differentiable and non-decreasing. Convergence rates for the values of the Moreau envelope as well as for the velocity of the system were obtained
\[
\Phi_{\lambda(t)}(x(t)) - \Phi^* = o\left( \frac{1}{t^2} \right) \text{ and } \| \dot x(t) \| = o\left( \frac{1}{t} \right) \text{ as } t \to +\infty,
\]
from where convergence rates for the  $\Phi$ along $x(t)$ were deduced
\[
\Phi \big( \prox\nolimits_{\lambda(t) \Phi} (x(t)) \big) - \Phi^* = o\left( \frac{1}{t^2} \right) \text{ and } \| \prox\nolimits_{\lambda(t) \Phi} (x(t)) - x(t) \| = o\left( \frac{\sqrt{\lambda(t)}}{t} \right) \text{ as } t \to +\infty.
\]
In addition, the weak convergence of the trajectories $x(t)$ to a minimizer of $\Phi$ as $t \to +\infty$ was established.

Attouch-L\'aszl\'o considered in \cite{AL} in the same context the dynamical system
\begin{equation}\label{NSystHess}
\ddot x(t) + \frac{\alpha}{t} \dot x(t) + \beta \frac{d}{dt} \nabla \Phi_{\lambda(t)}(x(t)) + \nabla \Phi_{\lambda(t)}(x(t)) = 0
\end{equation}
where $\alpha > 1$ and $\beta > 0$, and the term $\frac{d}{dt} \nabla \Phi_{\lambda(t)}(x(t))$ is inspired by the Hessian driven damping. It was shown that for $\lambda(t) = \lambda t^2$, where $\lambda >0$, the system \eqref{NSystHess} inherits all major convergence properties of \eqref{NSyst} and, in addition, the following convergence rates for the gradient of the Moreau envelope of parameter $\lambda(t)$ and its time derivative along $x(t)$ were established
\[
\| \nabla \Phi_{\lambda(t)} (x(t)) \| = o\left( \frac{1}{t^2} \right) \text{ and } \left\| \frac{d}{dt} \nabla \Phi_{\lambda(t)} (x(t)) \right\| = o\left( \frac{1}{t^2} \right) \text{ as } t \to +\infty.
\]

\subsection{Our contribution}

In this paper, we derive a setting formulated in terms of $\alpha \geq 1$ and the parameter functions $\beta$, $b$ and 
$\lambda$ of the dynamical system \eqref{Syst} associated with the minimization of the proper, convex and lower semicontinuous function $\Phi : H \to \overline{\mathbb{R}}$, which allow us to prove

\begin{itemize}
    
\item  convergence rates for the Moreau envelope, its gradient and the velocity of the trajectory
\[
\Phi_{\lambda(t)}(x(t)) - \Phi^* = o \left( \frac{1}{t^2 b(t)} \right), \ \| \nabla \Phi_{\lambda(t)} (x(t)) \| \ = \ o \left( \frac{1}{t \sqrt{b(t) \lambda(t)}} \right) \text{ and } \| \dot x(t) \| = o \left( \frac{1}{t} \right)
\]
as $t \to +\infty$, respectively;

\item convergence rates for the objective function
\[
\Phi \big( \prox\nolimits_{\lambda(t) \Phi} (x(t)) \big) - \Phi^* = o \left( \frac{1}{t^2 b(t)} \right) \text{ and } \| \prox\nolimits_{\lambda(t) \Phi} (x(t)) - x(t) \| \ = \ o \left( \frac{\sqrt{\lambda(t)}}{t \sqrt{b(t)}} \right)
\]
as $t \to +\infty$;

\item the weak convergence of the trajectory $x(t)$ to a minimizer of $\Phi$ as $t \to +\infty$. 
\end{itemize}

In addition, we provide a particular formulation of the derived general setting for the case when the parameter functions are chosen to be polynomials and illustrate the influence of the latter on the convergence behaviour of the dynamical system by multiple numerical experiments.

\subsection{Existence and uniqueness of strong global solution}

This section is devoted to the topic of existence and uniqueness of a strong global solution of the system of our interest. To this aim we will rewrite \eqref{Syst} as a system of the first order in time equations in the product space $H \times H$. 

We assume first that $\beta : [t_0, +\infty) \longrightarrow [0, +\infty)$ is twice continuously differentiable with $\beta(t) >0$ for every $t \geq t_0$. We integrate \eqref{Syst} from $t_0$ to $t$ to obtain
\begin{align*}
\dot x(t) + \beta(t) \nabla \Phi_{\lambda(t)} (x(t)) + \int_{t_0}^t \left( \frac{\alpha}{s} \dot x(s) + b(s) \nabla \Phi_{\lambda(s)} (x(s)) \right) ds - \ \int_{t_0}^t \nabla \Phi_{\lambda(s)} (x(s)) \dot \beta(s) ds  & \\
 - \left( \dot x(t_0) + \beta(t_0) \nabla \Phi_{\lambda(t_0)} (x(t_0)) \right) & = 0.
\end{align*}

We denote $z(t) := \int_{t_0}^t \left( \frac{\alpha}{s}  \dot x(s) + \left( b(s) - \dot \beta(s) \right) \nabla \Phi_{\lambda(s)} (x(s)) \right) ds - \big( u_0 + \beta(t_0) \nabla \Phi_{\lambda(t_0)} (x_0)) \big)$ for every $t \geq t_0$. Since $\dot z(t) = \frac{\alpha}{t}\dot x(t) + \left( b(t) - \dot \beta(t) \right) \nabla \Phi_{\lambda(t)} (x(t))$ we notice, that \eqref{Syst} is equivalent  to
\begin{equation*}
\begin{cases}
&\dot x(t) + \beta(t) \nabla \Phi_{\lambda(t)} (x(t)) + z(t) = 0, \\
&\dot z(t) - \frac{\alpha}{t} \dot x(t) - \left( b(t) - \dot \beta(t) \right) \nabla \Phi_{\lambda(t)}(x(t)) = 0, \\
&x(t_0) = x_0, \ z(t_0) = -\left(u_0 + \beta(t_0) \nabla \Phi_{\lambda(t_0)} (x_0) \right).
\end{cases}
\end{equation*}
After multiplying the first line by $b(t) - \dot \beta(t)$ and the second one by $\beta(t)$, by summing them we get rid of the gradient of the Moreau envelope in the second equation
\begin{equation*}
\begin{cases}
&\dot x(t) + \beta(t) \nabla \Phi_{\lambda(t)} (x(t)) + z(t) = 0, \\
&\beta(t) \dot z(t) + \left( b(t) - \dot \beta(t) - \frac{\alpha \beta(t)}{t} \right) \dot x(t) + \left( b(t) - \dot \beta(t) \right) z(t) = 0,\\
&x(t_0) = x_0, \ z(t_0) = -\left(u_0 + \beta(t_0) \nabla \Phi_{\lambda(t_0)} (x_0) \right).
\end{cases}
\end{equation*}
We denote $y(t) = \beta(t) z(t) + \left( b(t) - \dot \beta(t) - \frac{\alpha \beta(t)}{t} \right) x(t)$, and, after simplification, we obtain  for the dynamical system the following equivalent formulation
\begin{equation*}
\begin{cases}
&\dot x(t) + \beta(t) \nabla \Phi_{\lambda(t)} (x(t)) + \left( \frac{\dot \beta(t) - b(t)}{\beta(t)} + \frac{\alpha}{t} \right) x(t) + \frac{1}{\beta(t)} y(t) = 0, \\
&\dot y(t) + \left( \ddot \beta(t) + \frac{3 b(t) \dot \beta(t) - 2 \dot \beta^2(t) - b^2(t)}{\beta(t)} + \frac{\alpha}{t} \left( b(t) - \dot \beta(t) - \frac{\beta(t)}{t} \right) - \dot b(t) \right) x(t) + \frac{b(t) - 2 \dot \beta(t)}{\beta(t)} y(t) = 0, \\
&x(t_0) = x_0, \ y(t_0) = -\beta(t_0) \left(u_0 + \beta(t_0) \nabla \Phi_{\lambda(t_0)} (x_0) \right) + \left( b(t_0) - \dot \beta(t_0) - \frac{\alpha \beta(t_0)}{t_0} \right) x_0.
\end{cases}
\end{equation*}

In case $\beta(t) = 0$ for every $t \geq t_0$, \eqref{Syst}  can be equivalently written as
\begin{equation*}
\begin{cases}
&\dot x(t)  - y(t) = 0, \\
&\dot y(t) + \frac{\alpha}{t} y(t) + b(t) \nabla \Phi_{\lambda(t)}(x(t)) = 0, \\
&x(t_0) = x_0, \ y(t_0) = u_0.
\end{cases}
\end{equation*}

Based on the two reformulation of the dynamical system \eqref{Syst} we can formulate the following existence and uniqueness result, which is a consequence of the Cauchy-Lipschitz theorem for strong global solutions. The result can be proved in the lines of the proofs of Theorem 1 in \cite{AL} or of Theorem 1.1 in \cite{APR} with some small adjustments.

\begin{theorem}
Suppose that $\beta : [t_0, +\infty) \longrightarrow [0, +\infty)$ is twice continuously differentiable  such that  either $\beta(t) >0$ for every $t \geq t_0$ or $\beta(t) = 0$ for every $t \geq t_0$,  and that there exists $\lambda_0 > 0$ such that $\lambda(t) \geq \lambda_0$ for all $t \geq t_0$. Then for every $(x_0, u_0) \in H \times H $ there exists a unique strong global solution $x: [t_0, +\infty) \mapsto H$ of the continuous dynamics \eqref{Syst} which satisfies the Cauchy initial conditions $x(t_0) = x_0$ and $\dot x(t_0) = u_0$.
\end{theorem}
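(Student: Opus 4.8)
The plan is to apply the Cauchy--Lipschitz (Picard--Lindel\"of) theorem to the first-order reformulations of \eqref{Syst} derived above, and then to upgrade the resulting local solution to a global one by an a priori estimate combined with Gr\"onwall's inequality, exactly as in the proofs of Theorem 1 in \cite{AL} and Theorem 1.1 in \cite{APR}.

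Consider first the case $\beta(t) > 0$ for all $t \geq t_0$. Writing the state as $\xi(t) = (x(t), y(t)) \in H \times H$, the last equivalent formulation reads $\dot \xi(t) = F(t, \xi(t))$, $\xi(t_0) = \xi_0$, with $F$ read off from that system. The scalar coefficient functions appearing in $F$ --- namely $\beta, \dot\beta, \ddot\beta, b, \dot b$, together with $1/\beta(t)$ and $1/t$ --- are all continuous on $[t_0, +\infty)$ by the assumed differentiability, the positivity $\beta(t) > 0$, and $t_0 > 0$; hence they are bounded on every compact subinterval. The only nonlinear ingredient is the map $(t, x) \mapsto \nabla \Phi_{\lambda(t)}(x)$. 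By \eqref{Morprox} and the nonexpansiveness of the proximal operator, for each fixed $t$ the map $x \mapsto \nabla \Phi_{\lambda(t)}(x)$ is $\tfrac{1}{\lambda(t)}$-Lipschitz, hence $\tfrac{1}{\lambda_0}$-Lipschitz uniformly in $t$; and using \eqref{proxineq} together with the continuity of $\lambda$ and the nonexpansiveness of $\prox_{\lambda\Phi}$ one checks that $(t, x) \mapsto \nabla \Phi_{\lambda(t)}(x)$ is jointly continuous. It follows that $F(\cdot, \xi)$ is continuous for each fixed $\xi$, and that $F(t, \cdot)$ is (globally) Lipschitz continuous on $H \times H$ with a Lipschitz constant that is bounded on compact $t$-intervals, since every term in $F$ is either linear in $\xi$ with locally bounded coefficients or of the form $\beta(t)\nabla\Phi_{\lambda(t)}(x)$. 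The Cauchy--Lipschitz theorem therefore yields a unique maximal strong solution $\xi$ on some interval $[t_0, T_{\max})$.

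To see that $T_{\max} = +\infty$, observe that the bounds above give, on each compact $[t_0, T] \subset [t_0, T_{\max})$, an estimate $\|F(t, \xi)\| \leq a(T)\|\xi\| + c(T)$ with constants $a(T), c(T)$ independent of $t \in [t_0, T]$, where $c(T)$ absorbs $\sup_{t \leq T} \|\nabla \Phi_{\lambda(t)}(0)\| < +\infty$. Hence $\tfrac{d}{dt}\|\xi(t)\| \leq a(T)\|\xi(t)\| + c(T)$, and Gr\"onwall's inequality bounds $\|\xi(t)\|$ on $[t_0, T]$ by a constant depending only on $T$ and $\|\xi_0\|$. If $T_{\max}$ were finite, applying this with $T = T_{\max}$ would show that $\xi$, and therefore $x$, $\dot x$ and $\nabla \Phi_{\lambda(t)}(x(t))$, remain bounded as $t \uparrow T_{\max}$, so that the solution could be extended beyond $T_{\max}$, contradicting maximality. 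Thus the solution is global. Undoing the linear change of variables $z \mapsto y = \beta z + (\cdots) x$ --- which is invertible precisely because $\beta(t) > 0$ --- recovers the unique strong global solution $x : [t_0, +\infty) \to H$ of \eqref{Syst} satisfying $x(t_0) = x_0$, $\dot x(t_0) = u_0$.

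The case $\beta \equiv 0$ is handled in the same way, using the simpler reformulation $\dot x = y$, $\dot y = -\tfrac{\alpha}{t} y - b(t)\nabla \Phi_{\lambda(t)}(x)$; here no change of variables is needed, and the identical Lipschitz-plus-Gr\"onwall argument applies verbatim. I expect the only slightly delicate point to be the joint continuity and the uniform-in-$t$ Lipschitz estimate for $(t, x) \mapsto \nabla \Phi_{\lambda(t)}(x)$, that is, controlling the $t$-dependence entering through the Moreau parameter $\lambda(t)$; this is exactly where the lower bound $\lambda(t) \geq \lambda_0$ and the estimate \eqref{proxineq} are used, while the rest of the argument is routine.
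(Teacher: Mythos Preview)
Your proposal is correct and follows essentially the same approach as the paper: the paper itself does not give a detailed argument but simply points to the two first-order reformulations and states that the result follows from the Cauchy--Lipschitz theorem along the lines of \cite{AL,APR}, which is precisely the route you take. Your write-up in fact supplies more detail than the paper, in particular the uniform Lipschitz bound for $(t,x)\mapsto\nabla\Phi_{\lambda(t)}(x)$ via $\lambda(t)\geq\lambda_0$ and the Gr\"onwall argument for global existence.
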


\section{Energy function and rates of convergence for function values}

In this section we will define for the dynamical system \eqref{Syst} an energy function and investigate its dissipativity properties. These will play a crucial role in the derivation of rates of convergence for the Moreau envelope of $\Phi$ and the objective function itself. 

To shorten the calculations, we introduce the auxiliary function (see also \cite{ACFR})
 \begin{equation*}
w: [t_0, +\infty) \to \mathbb{R}, \quad w(t) \ = \ b(t) - \dot \beta(t) -\frac{\beta(t)}{t}.
\end{equation*}

For $z \in \argmin \Phi$ and 
\begin{equation}\label{c}
0 \leq c \leq \alpha - 1,
\end{equation}
consider the  energy function $E_c : [t_0,+\infty) \rightarrow [0,+\infty)$,
\begin{align*}
E_c(t) \ = \ &\big( t^2 w(t) + (\alpha-1-c) t \beta(t) \big) (\Phi_{\lambda(t)} (x(t)) - \Phi^*) + \frac{1}{2} \left \| c(x(t) - z) + t \dot x(t) + t \beta(t) \nabla \Phi_{\lambda(t)} (x(t)) \right \|^2 \\
&+ \ \frac{c(\alpha - 1 - c)}{2} \| x(t) - z \|^2.
\end{align*}

In the following theorem we formulate sufficient conditions that guarantee the decay of the energy of the the dynamical system \eqref{Syst}  and discuss  some of its consequences. 

\begin{theorem}\label{Th_2}

Suppose that $\alpha \geq 1$, $\lambda$ is nondecreasing on $[t_0,+\infty)$ and the following conditions
\begin{equation}\label{tbbeta}
b(t) > \dot \beta(t) + \frac{\beta(t)}{t} \ \mbox{for every} \ t \geq t_0
\end{equation}
and
\begin{equation}\label{As_1}
(\alpha - 3) w(t) - t \dot w(t) \geq 0 \ \mbox{for every} \ t \geq t_0
\end{equation}
are satisfied. Then, for a solution $x : [t_0,+\infty) \rightarrow H$ to  \eqref{Syst}, the following statements are true:

\begin{enumerate}[label=(\roman*)]

\item\label{i} $\dot E_c (t) \leq 0$ for every $t \geq t_0$;

\item\label{ii} $ \Phi_{\lambda(t)} (x(t)) - \Phi^* \ \leq \ \frac{E_{\alpha - 1}(t_0)}{t^2w(t)} $ for every $t \geq t_0$;

\item\label{iii} $ \int_{t_0}^{+\infty} \Big( t^2 w(t) \frac{\dot \lambda(t)}{2} + t^2 \beta(t) w(t) \Big) \| \nabla \Phi_{\lambda(t)} (x(t)) \|^2 dt \ < \ +\infty $;

\item\label{iv} $ \int_{t_0}^{+\infty} \big( (\alpha - 3)tw(t) - t^2 \dot w(t) \big) (\Phi_{\lambda(t)} (x(t)) - \Phi^*) dt \ < \ +\infty $.

Assuming moreover that $\alpha > 1$ and that 
\begin{equation}\label{As_2}
\mbox{there exists} \ \varepsilon \in (0, \alpha - 1) \ \mbox{such that} \ (\alpha - 3) w(t) - t \dot w(t) \ \geq \ \varepsilon b(t) \quad \forall t \geq t_0,
\end{equation}
it holds

\item\label{v} $ \int_{t_0}^{+\infty} t \| \dot x(t) \|^2 dt \ < \ +\infty $;

\item\label{vi} the trajectory $x$ is bounded and
\item\label{vii} $ \int_{t_0}^{+\infty} t b(t) (\Phi_{\lambda(t)} (x(t)) - \Phi^*) dt \ < \ +\infty $.
\end{enumerate}
\end{theorem}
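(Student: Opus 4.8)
\textbf{Proof plan.} The whole theorem rests on one differential inequality for $E_c$, from which (ii)--(vii) are extracted by integration. We work with the strong global solution $x$ furnished by Theorem~1 (applicable since $\lambda$ nondecreasing and $\lambda(t_0)>0$ give $\lambda(t)\geq\lambda(t_0)>0$), so all differentiations below are justified. First I would record three facts: (a) along the trajectory $\tfrac{d}{dt}\Phi_{\lambda(t)}(x(t))=\langle\nabla\Phi_{\lambda(t)}(x(t)),\dot x(t)\rangle-\tfrac{\dot\lambda(t)}{2}\|\nabla\Phi_{\lambda(t)}(x(t))\|^2$, from the chain rule and $\tfrac{d}{d\lambda}\Phi_\lambda(x)=-\tfrac12\|\nabla\Phi_\lambda(x)\|^2$; (b) $\beta(t)\tfrac{d}{dt}\nabla\Phi_{\lambda(t)}(x(t))=-\ddot x(t)-\tfrac{\alpha}{t}\dot x(t)-b(t)\nabla\Phi_{\lambda(t)}(x(t))$, namely \eqref{Syst} solved for the Hessian-damping term; and (c) since $\Phi_\lambda$ is convex with $\tfrac1\lambda$-Lipschitz gradient and any $z\in\argmin\Phi$ obeys $\nabla\Phi_\lambda(z)=0$, $\Phi_\lambda(z)=\Phi^*$, the descent-lemma inequality
\[
\langle\nabla\Phi_{\lambda(t)}(x(t)),x(t)-z\rangle\ \geq\ \Phi_{\lambda(t)}(x(t))-\Phi^*+\tfrac{\lambda(t)}{2}\|\nabla\Phi_{\lambda(t)}(x(t))\|^2 .
\]

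Then I would differentiate $E_c$. With $g(t):=\nabla\Phi_{\lambda(t)}(x(t))$, $a(t):=t^2w(t)+(\alpha-1-c)t\beta(t)$ and $v(t):=c(x(t)-z)+t\dot x(t)+t\beta(t)g(t)$, fact (b) yields the clean formula $\dot v(t)=(c+1-\alpha)\dot x(t)-tw(t)g(t)$, which is where $w$ enters (through $\beta+t\dot\beta-tb=-tw$). Expanding $\dot E_c=\dot a\,(\Phi_{\lambda(t)}(x(t))-\Phi^*)+a\,\tfrac{d}{dt}\Phi_{\lambda(t)}(x(t))+\langle v,\dot v\rangle+c(\alpha-1-c)\langle x-z,\dot x\rangle$ with (a), the decisive point is a double cancellation: the $\langle x-z,\dot x\rangle$-terms cancel (this is exactly what the summand $\tfrac{c(\alpha-1-c)}{2}\|x-z\|^2$ is designed for) and the $\langle g,\dot x\rangle$-terms cancel (since $a+(c+1-\alpha)t\beta-t^2w=0$), leaving
\[
\dot E_c(t)=\dot a(t)(\Phi_{\lambda(t)}(x(t))-\Phi^*)-t(\alpha-1-c)\|\dot x(t)\|^2-\Big(a(t)\tfrac{\dot\lambda(t)}{2}+t^2\beta(t)w(t)\Big)\|g(t)\|^2-ctw(t)\langle x(t)-z,g(t)\rangle .
\]
Plugging (c) into the last term (legitimate because $ctw(t)\geq0$ by \eqref{tbbeta}) and simplifying $\dot a-ctw$ via $\beta+t\dot\beta=tb-tw$ gives $\dot a(t)-ctw(t)=(\alpha-1-c)tb(t)-t\big((\alpha-3)w(t)-t\dot w(t)\big)$, hence
\begin{align*}
\dot E_c(t) \leq{}& \Big[(\alpha-1-c)tb(t)-t\big((\alpha-3)w(t)-t\dot w(t)\big)\Big]\big(\Phi_{\lambda(t)}(x(t))-\Phi^*\big)\\
& -t(\alpha-1-c)\|\dot x(t)\|^2-\Big(a(t)\tfrac{\dot\lambda(t)}{2}+t^2\beta(t)w(t)+ctw(t)\tfrac{\lambda(t)}{2}\Big)\|g(t)\|^2 .
\end{align*}
Because $\dot\lambda\geq0$ ($\lambda$ nondecreasing), $\beta\geq0$, $w>0$ and $0\leq c\leq\alpha-1$, the last two summands are $\leq0$; for $c=\alpha-1$ the bracket is $-t((\alpha-3)w-t\dot w)\leq0$ by \eqref{As_1} and the $\|\dot x\|^2$-coefficient vanishes, so $\dot E_{\alpha-1}(t)\leq0$, and more generally $\dot E_c(t)\leq0$ whenever $(\alpha-1-c)b(t)\leq(\alpha-3)w(t)-t\dot w(t)$, which by \eqref{As_2} holds for all $c\in[\alpha-1-\varepsilon,\alpha-1]$; this is (i).

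The remaining statements are read off from the last display. Since $E_{\alpha-1}$ is nonincreasing and $E_{\alpha-1}(t)\geq t^2w(t)(\Phi_{\lambda(t)}(x(t))-\Phi^*)$, one gets (ii). Integrating $\dot E_{\alpha-1}\leq0$ over $[t_0,t]$, keeping on the right only the $\|g(t)\|^2$-coefficient $t^2w\tfrac{\dot\lambda}{2}+t^2\beta w$ (resp.\ the $(\Phi_{\lambda}-\Phi^*)$-coefficient $(\alpha-3)tw-t^2\dot w$) and using $E_{\alpha-1}\geq0$, gives (iii) (resp.\ (iv)) as $t\to+\infty$. For (v), with $\alpha>1$ and \eqref{As_2}, I would fix $c\in(\alpha-1-\varepsilon,\alpha-1)$; then the bracket is $\leq(\alpha-1-c-\varepsilon)tb(t)\leq0$, so $\dot E_c(t)\leq-t(\alpha-1-c)\|\dot x(t)\|^2$, and integrating yields $\int_{t_0}^{+\infty}t\|\dot x(t)\|^2\,dt<+\infty$. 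For (vi), $\dot E_c\leq0$ makes $E_c$ bounded, hence $\|c(x(t)-z)+t\dot x(t)+t\beta(t)g(t)\|$ is bounded and, since $c(\alpha-1-c)>0$ for this $c$, so is $\|x(t)-z\|$; thus $x$ is bounded. For (vii), \eqref{As_2} gives $(\alpha-3)tw-t^2\dot w\geq\varepsilon tb\geq0$, so (iv) already delivers $\int_{t_0}^{+\infty}\varepsilon tb(t)(\Phi_{\lambda(t)}(x(t))-\Phi^*)\,dt<+\infty$. The main obstacle is the bookkeeping in the middle paragraph: producing the two exact cancellations and correctly reducing $\dot a-ctw$ to the form above; a secondary point is that, for (v)--(vii), $c$ must be taken in a left neighbourhood of $\alpha-1$ so that \eqref{As_2} can absorb the residual term $(\alpha-1-c)tb(t)(\Phi_{\lambda(t)}(x(t))-\Phi^*)$, which \eqref{As_1} alone does not control.
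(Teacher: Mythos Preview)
Your proof is correct and follows essentially the same route as the paper: derive the exact identity for $\dot E_c$ (your displayed formula matches the paper's equation for $\dot E_c$), bound the cross term $-ctw(t)\langle x(t)-z,g(t)\rangle$, and then read off (ii)--(iv) from $c=\alpha-1$ and (v)--(vii) from a smaller $c$. Two small differences worth flagging: the paper uses only the plain convexity inequality $\langle g,x-z\rangle\geq\Phi_{\lambda}(x)-\Phi^*$ rather than your sharper cocoercivity bound, so the extra $ctw(t)\tfrac{\lambda(t)}{2}\|g\|^2$ you produce is not needed (though harmless); and for (v)--(vi) the paper takes exactly $c=\alpha-1-\varepsilon$ rather than a point in $(\alpha-1-\varepsilon,\alpha-1)$, which is an immaterial choice. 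Your rewriting of $\dot a-ctw$ as $(\alpha-1-c)tb-t\big((\alpha-3)w-t\dot w\big)$ is in fact a cleaner way to see how \eqref{As_1} and \eqref{As_2} enter than the paper's intermediate form.
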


\begin{proof}
For every $t \geq t_0$ we obtain
\begin{align*}
\dot E_c(t) \ = \ &\big( 2tw(t) + t^2 \dot w(t) + \beta(t) (\alpha-1-c) + (\alpha-1-c) t \dot \beta(t) \big) (\Phi_{\lambda(t)} (x(t)) - \Phi^*) \\ 
&+ \ \big( t^2 w(t) + \beta(t) t (\alpha-1-c) \big) \Big( \langle \nabla \Phi_{\lambda(t)} (x(t)), \dot x(t) \rangle - \frac{\dot \lambda(t)}{2} \| \nabla \Phi_{\lambda(t)} (x(t)) \|^2 \Big) \\ 
&+ \ \Big\langle c(x(t) - z) + t \dot x(t) + t \beta(t) \nabla \Phi_{\lambda(t)} (x(t)), (c + 1) \dot x(t) + t \ddot x(t) + t \beta(t) \frac{d}{dt} \left( \nabla \Phi_{\lambda(t)} (x(t)) \right) \\
&+ \ (\beta(t) + t \dot \beta(t)) \nabla \Phi_{\lambda(t)} (x(t)) \Big\rangle \ + \ c(\alpha - 1 - c) \langle x(t) - z, \dot x(t) \rangle,
\end{align*}
where we used that
\begin{equation}\label{formula}
\frac{d}{dt} \left( \Phi_{\lambda(t)}(x(t)) - \Phi^* \right) = \big \langle \nabla \Phi_{\lambda(t)}(x(t)), \dot x(t) \big \rangle - \frac{\dot \lambda(t)}{2} \| \nabla \Phi_{\lambda(t)} x(t) \|^2.
\end{equation}
Using (\ref{Syst}) to replace $ \ddot x(t) $, we may write the third summand in the formulation of $\dot E_c(t)$ for every $t \geq t_0$ as
\begin{align*}
&\Big\langle c(x(t) - z) + t \dot x(t) + t \beta(t) \nabla \Phi_{\lambda(t)} (x(t)), (c + 1 - \alpha) \dot x(t) + (\beta(t) + t \dot \beta(t) - tb(t)) \nabla \Phi_{\lambda(t)} (x(t)) \Big\rangle \\
= \ &c(c + 1 - \alpha) \langle x(t) - z, \dot x(t) \rangle + c \big( \beta(t) + t \dot \beta(t) - tb(t) \big) \big \langle x(t) - z, \nabla \Phi_{\lambda(t)} (x(t)) \big \rangle + (c + 1 - \alpha)t \| \dot x(t) \|^2 \\ 
&+ \ \big( \beta(t) + t \dot \beta(t) - tb(t) \big) t \big \langle \dot x(t), \nabla \Phi_{\lambda(t)}(x(t)) \big \rangle + t \beta(t) (c + 1 - \alpha) \big\langle \dot x(t), \nabla \Phi_{\lambda(t)} (x(t)) \big\rangle \\ 
&+ \ t \beta(t) (\beta(t) + t \dot \beta(t) - tb(t)) \| \nabla \Phi_{\lambda(t)} (x(t)) \|^2.
\end{align*}
Overall, since $\beta(t) + t \dot \beta(t) - t b(t) = -t w(t)$, we obtain for every $t \geq t_0$
\begin{align*}
\dot E_c(t) \ = \ &\big( 2tw(t) + t^2 \dot w(t) - (\beta(t) + t \dot \beta(t)) (c + 1 - \alpha) \big) (\Phi_{\lambda(t)} (x(t)) - \Phi^*) \\
&+ \ \big( t^2 w(t) - t \beta(t) (c + 1 - \alpha) \big) \Big( \langle \nabla \Phi_{\lambda(t)} (x(t)), \dot x(t) \rangle - \frac{\dot \lambda(t)}{2} \| \nabla \Phi_{\lambda(t)} (x(t)) \|^2 \Big) \\ 
&- \ c t w(t) \big \langle x(t) - z, \nabla \Phi_{\lambda(t)} (x(t)) \big \rangle + (c + 1 - \alpha)t \| \dot x(t) \|^2 - t^2 w(t) \langle \dot x(t), \nabla \Phi_{\lambda(t)}(x(t)) \rangle \\ 
&+ \ t \beta(t) (c + 1 - \alpha) \big\langle \dot x(t), \nabla \Phi_{\lambda(t)} (x(t)) \big\rangle - t^2 \beta(t) w(t) \| \nabla \Phi_{\lambda(t)} (x(t)) \|^2.
\end{align*}
Notice that the terms with $ \langle \nabla \Phi_{\lambda(t)} (x(t)), \dot x(t) \rangle $ cancel each other, thus, after simplification we obtain for every $t \geq t_0$
\begin{equation}\label{dE}
\begin{split}
\dot E_c(t) \ = \ &\big( 2tw(t) + t^2 \dot w(t) + (\beta(t) + t \dot \beta(t)) (\alpha -1 -c) \big) (\Phi_{\lambda(t)} (x(t)) - \Phi^*) \\
&- \ \big( t^2 w(t) + t \beta(t) (\alpha -1 -c) \big) \frac{\dot \lambda(t)}{2} \| \nabla \Phi_{\lambda(t)} (x(t)) \|^2 - c t w(t) \big \langle x(t) - z, \nabla \Phi_{\lambda(t)} (x(t)) \big \rangle \\ 
&- \ (\alpha-1-c)t \| \dot x(t) \|^2 - t^2 \beta(t) w(t) \| \nabla \Phi_{\lambda(t)} (x(t)) \|^2.
\end{split}
\end{equation}
Thanks to (\ref{tbbeta}), $w(t)$ is positive for every $t \geq t_0$, thus
\[
-c t w(t) \big \langle x(t) - z, \nabla \Phi_{\lambda(t)} (x(t)) \big \rangle \ \leq \ -c t w(t) (\Phi_{\lambda(t)}(x(t)) - \Phi^*),
\]
which leads to
\begin{equation}\label{LF_0}
\begin{split}
\dot E_c(t) \ \leq \ &\big( (2 - c)tw(t) + t^2 \dot w(t) + (\beta(t) + t \dot \beta(t)) (\alpha -1 -c) \big) \big( \Phi_{\lambda(t)} (x(t)) - \Phi^* \big) \\ 
&- \ \Bigg( \big( t^2 w(t) + t \beta(t) (\alpha - 1 - c) \big) \frac{\dot \lambda(t)}{2} + t^2 \beta(t) w(t) \Bigg) \| \nabla \Phi_{\lambda(t)} (x(t)) \|^2 - \ (\alpha - c - 1)t \| \dot x(t) \|^2.
\end{split}
\end{equation}
By \eqref{c} and the fact that $\lambda$ is nondecreasing, we deduce that
\[
\big( t^2 w(t) + t \beta(t) (\alpha -1 -c) \big) \frac{\dot \lambda(t)}{2} + t^2 \beta(t) w(t) \ \geq \ 0,
\]
so, we obtain for every $t \geq t_0$
\begin{equation}\label{LF}
\begin{split}
\dot E_c(t) \ \leq \ &\big( (2 - c)tw(t) + t^2 \dot w(t) + (\beta(t) + t \dot \beta(t)) (\alpha - 1 - c) \big) (\Phi_{\lambda(t)} (x(t)) - \Phi^*) \\ 
&- \ \Bigg( \big( t^2 w(t) + t \beta(t) (\alpha - c - 1) \big) \frac{\dot \lambda(t)}{2} + t^2 \beta(t) w(t) \Bigg) \| \nabla \Phi_{\lambda(t)} (x(t)) \|^2.
\end{split}
\end{equation}

Let us choose $c := \alpha - 1$. According to (\ref{As_1}) we obtain for the coefficient of $\Phi_{\lambda(t)} (x(t)) - \Phi^*$ in \eqref{LF}
\[
(2 - c)tw(t) + t^2 \dot w(t) + (\beta(t) + t \dot \beta(t)) (\alpha - 1 - c) \ = -t \big( (\alpha-3) w(t) - t \dot w(t) \big) \ \leq \ 0.
\]
Therefore, \eqref{LF} allows us to deduce 
\begin{align*}
\dot E_{\alpha - 1}(t) \ &= \ -\big( (\alpha-3)tw(t) - t^2 \dot w(t) \big) (\Phi_{\lambda(t)} (x(t)) - \Phi^*) - \Big( t^2 w(t) \frac{\dot \lambda(t)}{2} + t^2 \beta(t) w(t) \Big) \| \nabla \Phi_{\lambda(t)} (x(t)) \|^2 \\
&\leq \ 0. 
\end{align*}
We have just established that $E_{\alpha-1}$ is nonincreasing, which leads for every $t\geq t_0$ to 
\begin{align*}
E_{\alpha - 1}(t) \ &= \ t^2 w(t) (\Phi_{\lambda(t)} (x(t)) - \Phi^*) + \frac{1}{2} \left \| (\alpha - 1)(x(t) - z) + t \dot x(t) + t \beta(t) \nabla \Phi_{\lambda(t)} (x(t)) \right \|^2 \\
&\leq \ E_{\alpha - 1} (t_0).
\end{align*}
From here we obtain for every $t\geq t_0$
\begin{equation}\label{R}
\Phi_{\lambda(t)} (x(t)) - \Phi^* \ \leq \ \frac{E_{\alpha - 1}(t_0)}{t^2w(t)},
\end{equation}
which proves \ref{ii}. Moreover, by integration, we obtain
\begin{equation}\label{I}
\int_{t_0}^{+\infty} \left( t^2 w(t) \frac{\dot \lambda(t)}{2} + t^2 \beta(t) w(t) \right) \| \nabla \Phi_{\lambda(t)} (x(t)) \|^2 dt \ \leq \  E_{\alpha-1}(t_0) \ < \ +\infty 
\end{equation}
and
\begin{equation}\label{I_1}
\int_{t_0}^{+\infty} \big( (\alpha - 3)tw(t) - t^2 \dot w(t) \big) (\Phi_{\lambda(t)} (x(t)) - \Phi^*) dt \ \leq \ E_{\alpha - 1}(t_0) \ < \ +\infty,
\end{equation}
which are the claims \ref{iii} and \ref{iv}.

From now on we assume that $\alpha > 1$ and choose $c := \alpha - 1 - \varepsilon$, where $\varepsilon$ is given by \eqref{As_2}. In this setting, \eqref{LF_0} reads for every $t \geq t_0$,
\begin{align}\label{LF_1}
\dot E_{\alpha - 1 - \varepsilon}(t) \ \leq \ &\big( (3 - \alpha + \varepsilon)tw(t) + t^2 \dot w(t) + \varepsilon (\beta(t) + t \dot \beta(t)) \big) (\Phi_{\lambda(t)} (x(t)) - \Phi^*) \nonumber \\ 
&- \ \Bigg( \big( t^2 w(t) + \varepsilon t \beta(t) \big) \frac{\dot \lambda(t)}{2} + t^2 \beta(t) w(t) \Bigg) \| \nabla \Phi_{\lambda(t)} (x(t)) \|^2 - \varepsilon t \| \dot x(t) \|^2 \nonumber \\
\ = \ & - t \big( (\alpha-3)w(t) - t \dot w(t) - \varepsilon b(t) \big) (\Phi_{\lambda(t)} (x(t)) - \Phi^*) \nonumber \\ 
&- \ \Bigg( \big( t^2 w(t) + \varepsilon t \beta(t) \big) \frac{\dot \lambda(t)}{2} + t^2 \beta(t) w(t) \Bigg) \| \nabla \Phi_{\lambda(t)} (x(t)) \|^2 - \varepsilon t \| \dot x(t) \|^2.
\end{align}
So, under the condition \eqref{As_2}, $\dot E_{\alpha - 1 - \varepsilon}(t) \ \leq \ 0$ for every $t \geq t_0$. Integrating \eqref{LF_1} we obtain
\begin{equation}\label{I_4}
\int_{t_0}^{+\infty} t \| \dot x(t) \|^2 dt \ < \ +\infty,
\end{equation}
which gives the claim \ref{v}. From the fact that the energy function 
\begin{align*}
E_{\alpha - 1 - \varepsilon}(t) \ = \ &\big( t^2 w(t) + \varepsilon t \beta(t) \big) (\Phi_{\lambda(t)} (x(t)) - \Phi^*) \\ 
&+ \ \frac{1}{2} \| (\alpha - 1 - \varepsilon)(x(t) - z) + t \dot x(t) + t \beta(t) \nabla \Phi_{\lambda(t)} (x(t)) \|^2 + \frac{(\alpha - 1 - \varepsilon)\varepsilon}{2} \| x(t) - z \|^2
\end{align*}
is bounded from above and it is nonnegative on $[t_0, +\infty)$, it follows that the trajectory $x$ is bounded, which is item \ref{vi}. Finally, from  \eqref{As_2} and \eqref{I_1} we deduce the claim \ref{vii}
\begin{equation}\label{I_2}
\int_{t_0}^{+\infty} \varepsilon t b(t) (\Phi_{\lambda(t)} (x(t)) - \Phi^*) dt \ \leq \ \int_{t_0}^{+\infty} \big( (\alpha - 3)tw(t) - t^2 \dot w(t) \big) (\Phi_{\lambda(t)} (x(t)) - \Phi^*) dt \ < \ +\infty,
\end{equation}
which finishes the proof.
\end{proof}

The following auxiliary result will be needed later. 
\begin{lemma}\label{lemma3}
Suppose that  $\alpha >1$ and  \eqref{As_2} holds, that $\lambda$ and $\beta$ are nondecreasing on $[t_0, +\infty)$, and that \eqref{tbbeta} holds. Then, for a solution $x : [t_0,+\infty) \rightarrow H$ to  \eqref{Syst}, it holds
\begin{equation}\label{I_3}
\int_{t_0}^{+\infty} t w(t) \langle \nabla \Phi_{\lambda(t)} (x(t)), x(t) - z \rangle dt \ < \ +\infty.
\end{equation}
\end{lemma}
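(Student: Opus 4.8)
The plan is to isolate the quantity $tw(t)\langle \nabla\Phi_{\lambda(t)}(x(t)),x(t)-z\rangle$ directly from the exact expression \eqref{dE} for $\dot E_c(t)$ by choosing the free parameter $c$ so that the unwanted terms vanish, and then integrate, using the conclusions of Theorem \ref{Th_2}. Write $\varphi(t):=\Phi_{\lambda(t)}(x(t))-\Phi^*$ and $g(t):=\nabla\Phi_{\lambda(t)}(x(t))$. First note that the hypotheses of the lemma contain \eqref{As_2}, which forces \eqref{As_1}, together with all remaining hypotheses of Theorem \ref{Th_2}, so items \ref{i}--\ref{vii} are available. Since $\Phi_{\lambda(t)}$ is convex and differentiable and $z\in\argmin\Phi=\argmin\Phi_{\lambda(t)}$ with $\Phi_{\lambda(t)}(z)=\Phi^*$, the subgradient inequality gives $\langle g(t),x(t)-z\rangle\ge\varphi(t)\ge 0$; combined with $w(t)>0$ from \eqref{tbbeta}, the integrand in \eqref{I_3} is nonnegative, so it suffices to bound $\int_{t_0}^{T} tw(t)\langle g(t),x(t)-z\rangle\,dt$ uniformly in $T\ge t_0$.

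Taking $c=\alpha-1$ in \eqref{dE} — which is admissible since $\alpha>1$ — the terms carrying the factor $\alpha-1-c$ (in particular the $t\|\dot x(t)\|^2$ term) drop out, and rearranging yields, for every $t\ge t_0$,
\[
(\alpha-1)tw(t)\langle g(t),x(t)-z\rangle=-\dot E_{\alpha-1}(t)+\big(2tw(t)+t^2\dot w(t)\big)\varphi(t)-\Big(t^2 w(t)\tfrac{\dot\lambda(t)}{2}+t^2\beta(t)w(t)\Big)\|g(t)\|^2.
\]
I would then use the identity $2tw(t)+t^2\dot w(t)=(\alpha-1)tw(t)-\big((\alpha-3)tw(t)-t^2\dot w(t)\big)$, together with \eqref{As_1} (which makes the last bracket nonnegative, hence the corresponding term nonpositive after multiplication by $\varphi(t)\ge 0$) and with $\dot\lambda\ge 0$, $\beta\ge 0$, $w>0$ (which makes the $\|g(t)\|^2$ term nonpositive), to arrive at
\[
(\alpha-1)tw(t)\langle g(t),x(t)-z\rangle\le-\dot E_{\alpha-1}(t)+(\alpha-1)tw(t)\varphi(t)\qquad\text{for every }t\ge t_0.
\]

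Finally I would integrate this inequality from $t_0$ to $T$. The first term on the right contributes $E_{\alpha-1}(t_0)-E_{\alpha-1}(T)\le E_{\alpha-1}(t_0)$, since $E_{\alpha-1}\ge 0$. For the second term I would use that $\beta$ is nondecreasing, so $w(t)=b(t)-\dot\beta(t)-\beta(t)/t\le b(t)$, whence $\int_{t_0}^{T}tw(t)\varphi(t)\,dt\le\int_{t_0}^{+\infty}tb(t)\varphi(t)\,dt<+\infty$ by \ref{vii}. Thus the partial integrals $\int_{t_0}^{T}tw(t)\langle g(t),x(t)-z\rangle\,dt$ are bounded above by a finite constant independent of $T$; since the integrand is nonnegative and $\alpha>1$, letting $T\to+\infty$ gives \eqref{I_3}. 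The proof involves no genuine obstacle beyond sign bookkeeping; the one point that really matters is the choice $c=\alpha-1$, which is precisely what eliminates the $t\|\dot x(t)\|^2$ and mixed terms so that everything remaining is already controlled by Theorem \ref{Th_2}.
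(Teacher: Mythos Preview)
Your proof is correct and shares the same backbone as the paper's: both start from the exact expression \eqref{dE} with the choice $c=\alpha-1$, which kills the $t\|\dot x(t)\|^2$ term and leaves only $(\alpha-1)tw(t)\langle g(t),x(t)-z\rangle$ as the inner-product contribution. The difference lies in how the residual $\varphi$–term is disposed of. The paper splits $(\alpha-1)tw(t)\langle g(t),x(t)-z\rangle$ as $(\alpha-1-\varepsilon)+\varepsilon$ times the inner product, applies the subgradient inequality only to the first piece, and then uses \eqref{As_2} together with $w(t)\le b(t)$ to make the coefficient $t^2\dot w(t)-(\alpha-3-\varepsilon)tw(t)$ in front of $\varphi(t)$ nonpositive, yielding the self-contained bound $\varepsilon\int_{t_0}^{T}tw(t)\langle g(t),x(t)-z\rangle\,dt\le E_{\alpha-1}(t_0)$. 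You instead bound $2tw(t)+t^2\dot w(t)\le(\alpha-1)tw(t)$ via \eqref{As_1} and then invoke Theorem~\ref{Th_2}\,\ref{vii} (through $w\le b$) to control $\int tw(t)\varphi(t)\,dt$. Both routes are valid; the paper's is slightly more self-contained (it does not need to cite \ref{vii}), while yours is a touch shorter because it reuses an already-established estimate rather than reproving it via the $\varepsilon$–split.
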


\begin{proof}
Recall that according to \eqref{dE} we have for every $t \geq t_0$
\begin{align*}
\dot E_c(t) \ = \ &\big( 2tw(t) + t^2 \dot w(t) + (\beta(t) + t \dot \beta(t)) (\alpha - 1 - c) \big) (\Phi_{\lambda(t)} (x(t)) - \Phi^*) \\ 
&- \ \big( t^2 w(t) + t \beta(t) (\alpha-1-c) \big) \frac{\dot \lambda(t)}{2} \| \nabla \Phi_{\lambda(t)} (x(t)) \|^2 \\ 
&- \ c t w(t) \big \langle x(t) - z, \nabla \Phi_{\lambda(t)} (x(t)) \big \rangle - (\alpha-1-ct) \| \dot x(t) \|^2 -  t^2 \beta(t) w(t) \| \nabla \Phi_{\lambda(t)} (x(t)) \|^2.
\end{align*}
We choose again $c: = \alpha - 1$ and split the term $(\alpha - 1)t w(t) \langle x(t) - z, \nabla \Phi_{\lambda(t)} (x(t)) \rangle$ into the sum of two expressed in terms of $\varepsilon$  given by  \eqref{As_2}. For every $t \geq t_0$, we have
\begin{align*}
\dot E_{\alpha - 1}(t) \ \leq \ &\big( 2tw(t) + t^2 \dot w(t) \big) (\Phi_{\lambda(t)} (x(t)) - \Phi^*) \\
&- \ ( \alpha - 1 - \varepsilon) t w(t) \big \langle x(t) - z, \nabla \Phi_{\lambda(t)} (x(t)) \rangle - \varepsilon t w(t) \big \langle x(t) - z, \nabla \Phi_{\lambda(t)} (x(t)) \big \rangle.
\end{align*}
By applying the convex subdifferential inequality we obtain for every $t \geq t_0$
\begin{align}\label{dE_1}
\dot E_{\alpha - 1}(t) \ \leq \ &\big( 2tw(t) + t^2 \dot w(t) - (\alpha - 1 - \varepsilon) t w(t) \big) (\Phi_{\lambda(t)} (x(t)) - \Phi^*) - \ \varepsilon t w(t) \big \langle x(t) - z, \nabla \Phi_{\lambda(t)} (x(t)) \big \rangle \nonumber\\
\ =  \ &\big( t^2 \dot w(t) - \left( \alpha - 3 - \varepsilon \right) t w(t) \big) (\Phi_{\lambda(t)} (x(t)) - \Phi^*) - \ \varepsilon t w(t) \big \langle x(t) - z, \nabla \Phi_{\lambda(t)} (x(t)) \big \rangle.
\end{align}

Since $\beta$ is nondecreasing, for every $t \geq t_0$ it holds
\[
b(t) = w(t) + \dot \beta(t) + \frac{\beta(t)}{t} \geq w(t),
\]
thus, \eqref{As_2} leads to $ t^2 \dot w(t) - (\alpha - 3 -\varepsilon) t w(t) \leq 0$. Consequently, we obtain from \eqref{dE_1} by integration
\[
\int_{t_0}^{+\infty} t w(t) \langle \nabla \Phi_{\lambda(t)} (x(t)), x(t) - z \rangle dt \ \leq \ E_{\alpha - 1}(t_0) \ < \ +\infty.
\]\end{proof}

Now we are in position to improve the convergence rates which we obtained previously in \eqref{R} and to derive from here convergence rates for $\Phi$.

\begin{theorem}\label{CFVG}
Suppose that  $\alpha >1$ and  \eqref{As_2} holds, that $\lambda$ and $\beta$ are nondecreasing on $[t_0, +\infty)$, and that \eqref{tbbeta} holds. Assume in addition
\begin{equation}\label{As_4}
\int_{t_0}^{+\infty} \left[ \frac{ \big(\dot \lambda(t)\big)^2 t^3 \beta^2(t)}{\lambda^4(t)} - \frac{\dot \lambda(t) t^2 b(t)}{2 \lambda^2(t)} \right]_+ dt < +\infty,
\end{equation}
where $[\cdot]_+$ denotes the positive part of the expression inside the brackets, and that there exists $C>0$ such that
\begin{equation}\label{As}
\frac{d}{dt} \left( t^2 b(t) \right) \ \leq \ C t b(t) \ \mbox{for every} \ t \geq t_0.
\end{equation}
Then, for a solution $x : [t_0,+\infty) \rightarrow H$ to  \eqref{Syst}, it holds
\begin{equation}\label{rates_1}
\Phi_{\lambda(t)}(x(t)) - \Phi^* = o \left( \frac{1}{t^2 b(t)} \right) \text{ and } \| \dot x(t) \| = o \left( \frac{1}{t} \right) \text{ as } t \to +\infty.
\end{equation}
Moreover, 
\begin{equation}\label{rates_111}
\| \nabla \Phi_{\lambda(t)} (x(t)) \| \ = \ o \left( \frac{1}{t \sqrt{b(t) \lambda(t)}} \right) \text{ as } t \to +\infty,
\end{equation}
and
\begin{equation}\label{rates_11}
\Phi(\prox\nolimits_{\lambda(t) \Phi} (x(t))) - \Phi^* = o \left( \frac{1}{t^2 b(t)} \right) \text{ and } \| \prox\nolimits_{\lambda(t) \Phi} (x(t)) - x(t) \| \ = \ o \left( \frac{\sqrt{\lambda(t)}}{t \sqrt{b(t)}} \right) \text{ as } t \to +\infty.
\end{equation}
\end{theorem}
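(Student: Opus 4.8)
The plan is to reduce the entire statement to its first assertion, the rate $\Phi_{\lambda(t)}(x(t))-\Phi^*=o\big(1/(t^2b(t))\big)$, since all the remaining rates follow from it by elementary properties of the Moreau envelope. Indeed, by the very definition of $\Phi_{\lambda(t)}$ together with \eqref{Morprox}, for every $t\ge t_0$
\[
\Phi_{\lambda(t)}(x(t))-\Phi^* \ = \ \big(\Phi(\prox\nolimits_{\lambda(t) \Phi}(x(t)))-\Phi^*\big)+\frac{\lambda(t)}{2}\big\|\nabla\Phi_{\lambda(t)}(x(t))\big\|^2,
\]
and both summands on the right are nonnegative (the first because $\Phi^*=\min\Phi$). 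Hence, once $\Phi_{\lambda(t)}(x(t))-\Phi^*=o\big(1/(t^2b(t))\big)$ is established, each of them is $o\big(1/(t^2b(t))\big)$: the first gives the objective-value rate in \eqref{rates_11}, the second divided by $\lambda(t)/2$ gives \eqref{rates_111}, and then $\|\prox_{\lambda(t) \Phi}(x(t))-x(t)\|=\lambda(t)\|\nabla\Phi_{\lambda(t)}(x(t))\|$ gives the second part of \eqref{rates_11}. So it only remains to prove \eqref{rates_1}.

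For the improved rate, put $f(t):=\Phi_{\lambda(t)}(x(t))-\Phi^*\ge 0$ and fix $z\in\argmin\Phi$. Theorem~\ref{Th_2}(vii) gives $\int_{t_0}^{+\infty}tb(t)f(t)\,dt<+\infty$, and since $\int^{+\infty}dt/t=+\infty$, it is enough to show that $t^2b(t)f(t)$ has a limit as $t\to+\infty$; that limit is then necessarily $0$, because a positive limit would make $tb(t)f(t)$ non-integrable. To produce such a limit I would study an auxiliary energy obtained from $E_{\alpha-1}$ (or $E_{\alpha-1-\varepsilon}$) of Section~2 by replacing its leading weight $t^2w(t)$ by $t^2b(t)$ and adding suitable lower-order corrections,
\[
\mathcal E(t) \ = \ t^2b(t)f(t)+\tfrac12\big\|(\alpha-1)(x(t)-z)+t\dot x(t)+t\beta(t)\nabla\Phi_{\lambda(t)}(x(t))\big\|^2+(\text{lower-order terms}).
\]
Differentiating along the trajectory and eliminating $\ddot x$ via \eqref{Syst} exactly as in the proof of Theorem~\ref{Th_2}, one finds that $\dot{\mathcal E}(t)$ is the sum of $\dot E_{\alpha-1}(t)\le 0$ (or $\dot E_{\alpha-1-\varepsilon}(t)\le 0$) and correction terms of three kinds: (a) a term $\tfrac{d}{dt}\big(t^2b(t)\big)f(t)$, bounded above by $Ctb(t)f(t)$ thanks to \eqref{As} and hence integrable by Theorem~\ref{Th_2}(vii); (b) terms containing $\langle\nabla\Phi_{\lambda(t)}(x(t)),\dot x(t)\rangle$ and $\beta(t)\langle\,\cdot\,,\tfrac{d}{dt}\nabla\Phi_{\lambda(t)}(x(t))\rangle$, which — using the estimate $\|\tfrac{d}{dt}\nabla\Phi_{\lambda(t)}(x(t))\|\le\lambda(t)^{-1}\|\dot x(t)\|+2\lambda(t)^{-1}\dot\lambda(t)\|\nabla\Phi_{\lambda(t)}(x(t))\|$ that follows from \eqref{proxineq} and the $\lambda(t)^{-1}$-Lipschitz continuity of $\nabla\Phi_{\lambda(t)}$, followed by Young's inequality — split into multiples of $t\|\dot x(t)\|^2$ (integrable by Theorem~\ref{Th_2}(v)) and a multiple of $\|\nabla\Phi_{\lambda(t)}(x(t))\|^2=\lambda(t)^{-2}\|x(t)-\prox_{\lambda(t) \Phi}(x(t))\|^2$ whose coefficient, after taking its positive part and using the good term $-t^2b(t)\tfrac{\dot\lambda(t)}{2}\|\nabla\Phi_{\lambda(t)}(x(t))\|^2$, is dominated by the integrand of \eqref{As_4} plus the integrand of Theorem~\ref{Th_2}(iii); (c) a term in $\langle x(t)-z,\nabla\Phi_{\lambda(t)}(x(t))\rangle$ which is controlled either by its sign (convexity) or by Lemma~\ref{lemma3}. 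Altogether $\dot{\mathcal E}(t)\le k(t)$ with $\int_{t_0}^{+\infty}k_+(t)\,dt<+\infty$, so $t\mapsto\mathcal E(t)-\int_{t_0}^{t}k_+(s)\,ds$ is nonincreasing and bounded from below and therefore $\mathcal E$ has a limit; together with the boundedness of $x$ from Theorem~\ref{Th_2}(vi) this yields the convergence of $t^2b(t)f(t)$, hence $t^2b(t)f(t)\to0$.

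The velocity estimate $\|\dot x(t)\|=o(1/t)$ comes out of the same circle of ideas. A direct computation from \eqref{Syst} gives $\tfrac{d}{dt}\big[(\alpha-1)(x(t)-z)+t\dot x(t)+t\beta(t)\nabla\Phi_{\lambda(t)}(x(t))\big]=-tw(t)\nabla\Phi_{\lambda(t)}(x(t))$, so from the convergence of $\mathcal E$, the boundedness of $x$, the envelope rate just obtained, \eqref{proxineq}, and $\int_{t_0}^{+\infty}t\|\dot x(t)\|^2\,dt<+\infty$ (Theorem~\ref{Th_2}(v)) one shows that $t^2\|\dot x(t)\|^2$ converges; since $\int_{t_0}^{+\infty}\tfrac1t\big(t^2\|\dot x(t)\|^2\big)\,dt=\int_{t_0}^{+\infty}t\|\dot x(t)\|^2\,dt<+\infty$ while $\int^{+\infty}dt/t=+\infty$, the limit must be $0$.

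The main obstacle is the construction in the second step: selecting the precise lower-order correction in $\mathcal E$ and the precise Young splittings so that the terms generated by the weight change $t^2w(t)\rightsquigarrow t^2b(t)$ and by the Hessian-driven damping are exactly the ones absorbed by \eqref{As}, \eqref{As_4}, Lemma~\ref{lemma3} and Theorem~\ref{Th_2}(iii),(v); and, within that step, making the passage from ``$\mathcal E$ is almost nonincreasing'' to ``$t^2b(t)f(t)$ converges'' rigorous, which requires showing that the velocity/anchoring part of $\mathcal E$ also converges, not just $\mathcal E$ itself. Everything else is routine computation and the elementary Moreau-envelope identities recorded above.
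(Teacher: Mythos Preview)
Your overall strategy---show that a certain nonnegative quantity has a limit and then force that limit to be $0$ via the $L^1$ estimates of Theorem~\ref{Th_2}---is exactly the paper's strategy, and your reduction of \eqref{rates_111}--\eqref{rates_11} to \eqref{rates_1} is correct. However, the paper's choice of auxiliary quantity is both simpler and sidesteps the ``main obstacle'' you flag. Instead of perturbing $E_{\alpha-1}$ and carrying the anchor term $(\alpha-1)(x(t)-z)+t\dot x(t)+t\beta(t)\nabla\Phi_{\lambda(t)}(x(t))$, the paper works directly with
\[
h(t)\;:=\;\tfrac{t^2}{2}\|\dot x(t)\|^2+t^2b(t)\big(\Phi_{\lambda(t)}(x(t))-\Phi^*\big),
\]
obtained by taking the inner product of \eqref{Syst} with $t^2\dot x(t)$. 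The cross term $t^2\beta(t)\big\langle\tfrac{d}{dt}\nabla\Phi_{\lambda(t)}(x(t)),\dot x(t)\big\rangle$ is controlled not through a norm bound as you propose, but through the sharper \emph{inner-product} inequality
\[
\Big\langle\tfrac{d}{dt}\nabla\Phi_{\lambda(t)}(x(t)),\dot x(t)\Big\rangle\;\ge\;-\,\frac{2\dot\lambda(t)}{\lambda(t)}\|\nabla\Phi_{\lambda(t)}(x(t))\|\,\|\dot x(t)\|,
\]
which comes from the monotonicity of $\nabla\Phi_{\lambda(t+h)}$ and \eqref{proxineq}. After one Young inequality this produces exactly the integrand of \eqref{As_4} (times $\|\nabla\Phi_{\lambda(t)}(x(t))\|^2$, handled via $\|\nabla\Phi_{\lambda(t)}(x(t))\|=O(1/\lambda(t))$ from Theorem~\ref{Th_2}(vi)) plus a multiple of $t\|\dot x(t)\|^2$; the remaining term $\tfrac{d}{dt}(t^2b(t))f(t)$ is absorbed by \eqref{As} and Theorem~\ref{Th_2}(vii). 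Then $\lim_{t\to+\infty}h(t)$ exists by Lemma~\ref{l}, and since $h(t)/t$ is integrable (Theorem~\ref{Th_2}(v),(vii)) the limit is $0$. Both summands of $h$ being nonnegative, \eqref{rates_1} follows at once---the envelope rate and the velocity rate together, with no need to separate the anchor part from the function-value part, and no appeal to Lemma~\ref{lemma3} at all.

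So your plan is not wrong, but the difficulty you yourself identify---extracting convergence of $t^2b(t)f(t)$ from convergence of $\mathcal E$ when the second summand of $\mathcal E$ is not known to converge---is real and is precisely what the paper avoids by choosing $h$ instead of a perturbed $E_{\alpha-1}$. Replacing your $\mathcal E$ by $h$ and your norm estimate on $\tfrac{d}{dt}\nabla\Phi_{\lambda(t)}(x(t))$ by the monotonicity-based inner-product bound above turns your sketch into the paper's proof.
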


\begin{proof}
First we notice that for every $t \geq t_0$ it holds
\begin{align*} 
\left \langle \frac{d}{dt} \left( \nabla \Phi_{\lambda(t)}(x(t)) \right), \dot x(t) \right \rangle \ &= \ \left \langle \lim_{h \to 0} \frac{\nabla \Phi_{\lambda(t + h)} (x(t + h)) - \nabla \Phi_{\lambda(t)} (x(t))}{h}, \dot x(t) \right \rangle \\
&= \ \left \langle \lim_{h \to 0} \frac{\nabla \Phi_{\lambda(t + h)} (x(t + h)) - \nabla \Phi_{\lambda(t + h)} (x(t))}{h}, \dot x(t) \right \rangle \\ &\ \ \ \ + \ \left \langle \lim_{h \to 0} \frac{\nabla \Phi_{\lambda(t + h)} (x(t)) - \nabla \Phi_{\lambda(t)} (x(t))}{h}, \dot x(t) \right \rangle.
\end{align*}
For every $h > 0$, by the monotonicity of the gradient of a convex function, we have 
\[
\left \langle \frac{\nabla \Phi_{\lambda(t + h)} (x(t + h)) - \nabla \Phi_{\lambda(t + h)} (x(t))}{h}, \frac{x(t + h) - x(t)}{h} \right \rangle \ \geq \ 0,
\]
so letting $h$ tend to zero we obtain
\[
\left \langle \lim_{h \to 0} \frac{\nabla \Phi_{\lambda(t + h)} (x(t + h)) - \nabla \Phi_{\lambda(t + h)} (x(t))}{h}, \dot x(t) \right \rangle \ \geq \ 0.
\]
Consequently, for every $t \geq t_0$ it holds
\begin{align*}
&\left \langle \frac{d}{dt} \left( \nabla \Phi_{\lambda(t)}(x(t)) \right), \dot x(t) \right \rangle \\ 
\geq & \ \left \langle \lim_{h \to 0} \frac{\nabla \Phi_{\lambda(t + h)} (x(t)) - \nabla \Phi_{\lambda(t)} (x(t))}{h}, \dot x(t) \right \rangle \\
= \ &\lim_{h \to 0} \left \langle \frac{ (\lambda(t + h)) \prox_{\lambda(t) \Phi} (x(t)) - \lambda(t) \prox_{(\lambda(t + h)) \Phi} (x(t)) - \big( \lambda(t + h) - \lambda(t) \big) x(t) }{\lambda(t) \lambda(t + h)h}, \dot x(t) \right \rangle \\
= \ &\lim_{h \to 0} \left \langle \frac{\big( \lambda(t + h) - \lambda(t) \big) \big( \prox_{\lambda(t) \Phi} (x(t)) - x(t) \big)}{\lambda(t) \lambda(t + h) h}, \dot x(t) \right \rangle \\
&- \ \lim_{h \to 0} \left \langle \frac{\prox_{(\lambda(t + h)) \Phi} (x(t)) - \prox_{\lambda(t) \Phi} (x(t))}{\lambda(t+h)h}, \dot x(t) \right \rangle \\
\geq \ &\frac{\dot \lambda(t)}{\lambda^2(t)} \left \langle \prox\nolimits_{\lambda(t) \Phi} (x(t)) - x(t), \dot x(t) \right \rangle \ - \ \lim_{h \to 0} \frac{(\lambda(t + h) - \lambda(t)) \| \nabla \Phi_{\lambda(t)} (x(t)) \| \| \dot x(t) \| }{\lambda(t + h) h} \\
= \ &\frac{\dot \lambda(t)}{\lambda^2(t)} \left \langle \prox\nolimits_{\lambda(t) \Phi} (x(t)) - x(t), \dot x(t) \right \rangle \ - \ \frac{\dot \lambda(t) \| \nabla \Phi_{\lambda(t)} (x(t)) \| \| \dot x(t) \|}{\lambda(t)} \\
 = \ & -\frac{\dot \lambda(t)}{\lambda(t)} \left \langle \nabla \Phi_{\lambda(t)} (x(t)), \dot x(t) \right \rangle - \ \frac{\dot \lambda(t) \| \nabla \Phi_{\lambda(t)} (x(t)) \| \| \dot x(t) \|}{\lambda(t)} \ \geq \ - \frac{2 \dot \lambda(t) \| \nabla \Phi_{\lambda(t)} (x(t)) \| \| \dot x(t) \|}{\lambda(t)},
\end{align*}
where we used \eqref{Morprox}, \eqref{proxineq} and the Cauchy-Schwarz inequality. Now we multiply (\ref{Syst}) by $t^2 \dot x(t)$ to deduce, by using the inequality above and \eqref{formula}, for every $t \geq t_0$
\begin{align*}
0 \ = & \ t^2 \langle \ddot x(t), \dot x(t) \rangle + \alpha t \| \dot x(t) \|^2 + t^2 \beta(t) \left \langle \frac{d}{dt} \left( \nabla \Phi_{\lambda(t)}(x(t)) \right), \dot x(t) \right \rangle + t^2 b(t) \big \langle \nabla \Phi_{\lambda(t)}(x(t)), \dot x(t) \big \rangle \\
\geq & \ t^2 \frac{d}{dt} \left( \frac{1}{2} \| \dot x(t) \|^2 \right) + \alpha t \| \dot x(t) \|^2 + t^2 b(t) \frac{d}{dt} \left( \Phi_{\lambda(t)}(x(t)) - \Phi^* \right) + \frac{\dot \lambda(t) t^2 b(t)}{2} \| \nabla \Phi_{\lambda(t)} x(t) \|^2 \\ 
& - \ \frac{2 t^2 \beta(t) \dot \lambda(t)}{\lambda(t)} \| \nabla \Phi_{\lambda(t)} (x(t)) \| \| \dot x(t) \| \\ 
 \geq & \ \frac{d}{dt} \left( \frac{t^2}{2} \| \dot x(t) \|^2 + t^2 b(t) (\Phi_{\lambda(t)}(x(t)) - \Phi^*) \right) + (\alpha - 1) t \| \dot x(t) \|^2 - \big( \Phi_{\lambda(t)}(x(t)) - \Phi^* \big) \frac{d}{dt} \left( t^2 b(t) \right)\\ 
& - \ \left\{ \left[ \left( \frac{\dot \lambda(t)}{\lambda(t)} \right)^2 t^3 \beta^2(t) - \frac{\dot \lambda(t) t^2 b(t)}{2} \right] \| \nabla \Phi_{\lambda(t)} (x(t)) \|^2 + t \| \dot x(t) \|^2 \right\}.
\end{align*}
Using \eqref{As} we obtain for every $t \geq t_0$
\begin{align*}
\frac{d}{dt} \left( \frac{t^2}{2} \| \dot x(t) \|^2 + t^2 b(t) (\Phi_{\lambda(t)}(x(t)) - \Phi^*) \right) \leq \ &[2 - \alpha]_+ t \| \dot x(t) \|^2 + \big( \Phi_{\lambda(t)}(x(t)) - \Phi^* \big) C t b(t) \\
&+ \ \left[ \left( \frac{\dot \lambda(t)}{\lambda(t)} \right)^2 t^3 \beta^2(t) - \frac{\dot \lambda(t) t^2 b(t)}{2} \right]_+ \| \nabla \Phi_{\lambda(t)} (x(t)) \|^2.
\end{align*}
Next we show the integrability of the right-hand side of the expression above. The first term is integrable according to Theorem \ref{Th_2} (v) and the second one is integrable according to Theorem \ref{Th_2} (vii). Further, since 
\[
\|  \nabla \Phi_{\lambda(t)}(x(t)) -  \nabla \Phi_{\lambda(t)}(z) \| \ \leq \ \frac{1}{\lambda(t)} \| x(t) - z \| \ \forall t \geq t_0,
\]
and taking into the account the boundedness of the trajectory $x$ established in Theorem \ref{Th_2} (vi) and that $z \in \argmin \Phi$, we deduce
\begin{equation*}
\| \nabla \Phi_{\lambda(t)}(x(t)) \| \ = \ O \left( \frac{1}{\lambda(t)} \right) \text{ as } t \to +\infty.
\end{equation*}
So, under the assumption \eqref{As_4}, we obtain that there exists $\widetilde C > 0$ such that for every $t \geq t_0$
\begin{align*}
\int_{t_0}^t \left[ \left( \frac{\dot \lambda(s)}{\lambda(s)} \right)^2 s^3 \beta^2(s) - \frac{\dot \lambda(s) s^2 b(s)}{2} \right]_+ \| \nabla \Phi_{\lambda(s)} (x(s)) \|^2 ds &\leq \widetilde C \int_{t_0}^t \left[ \frac{\big(\dot \lambda(s)\big)^2 s^3 \beta^2(s)}{\lambda^4(s)} - \frac{\dot \lambda(s) s^2 b(s)}{2 \lambda^2(s)} \right]_+ ds \\
&< \ +\infty.
\end{align*}
Applying Lemma \ref{l}  in the Appendix, we conclude that the following limit 
\[
L := \lim_{t \to +\infty} \left( \frac{t^2}{2} \| \dot x(t) \|^2 + t^2 b(t) (\Phi_{\lambda(t)}(x(t)) - \Phi^*) \right) \geq 0
\]
exists. We will show that $L = 0$. Supposing that $L >0$, we deduce that there exists $t^* \geq t_0$ such that for every $t \geq t^*$
\[
\frac{t}{2} \| \dot x(t) \|^2 + t b(t) (\Phi_{\lambda(t)}(x(t)) - \Phi^*) \geq \frac{L}{2t}.
\]
Integrating the last inequality on $[t^*, +\infty)$, we arrive at the contradiction with the integrability of the left-hand side as proved in Theorem \ref{Th_2} (v) and (vii). Therefore, $L = 0$ and we obtain
\[
\Phi_{\lambda(t)}(x(t)) - \Phi^* = o \left( \frac{1}{t^2 b(t)} \right) \text{ and } \| \dot x(t) \| = o \left( \frac{1}{t} \right) \text{ as } t \to +\infty.
\]
Using the definition of the proximal mapping, we derive
\begin{equation}\label{proxPhit}
\Phi_{\lambda(t)}(x(t)) - \Phi^* = \ \Phi(\prox\nolimits_{\lambda(t) \Phi}(x(t))) - \Phi^* + \frac{1}{2\lambda(t)} \| \prox\nolimits_{\lambda(t) \Phi} (x(t)) - x(t) \|^2 \quad \forall t \geq t_0,
\end{equation}
which yields
\[
\Phi(\prox\nolimits_{\lambda(t) \Phi}(x(t))) - \Phi^* = o \left( \frac{1}{t^2 b(t)} \right) \text{ and } \|\prox\nolimits_{\lambda(t) \Phi}(x(t)) - x(t) \| \ = \ o \left( \frac{\sqrt{\lambda(t)}}{t \sqrt{b(t)}} \right) \text{ as } t \to +\infty.
\]
According to \eqref{Morprox} we obtain from here
\[
\| \nabla \Phi_{\lambda(t)} (x(t)) \| \ = \ o \left( \frac{1}{t \sqrt{b(t) \lambda(t)}} \right) \text{ as } t \to +\infty.
\]
\end{proof}

\section{Convergence of the trajectories}

In this section we will investigate the weak convergence of the trajectory $x$ to a minimizer of $\Phi$.

\begin{theorem}\label{CT}
Suppose that  $\alpha >1$, \eqref{tbbeta} and \eqref{As_2} hold and that $\lambda$ and $\beta$ are nondecreasing on $[t_0, +\infty)$. Assume in addition that
\begin{equation}\label{As_3}
\lim_{t \to +\infty} \frac{\beta(t)}{t w(t)} \ = \ 0
\end{equation}
and
\begin{equation}\label{As_5}
\sup_{t \geq t_0} \frac{\lambda(t)}{t} \ < \ +\infty.
\end{equation}
If $x : [t_0,+\infty) \rightarrow H$ is a solution to \eqref{Syst}, then $x(t)$ converges weakly to a minimizer of $\Phi$ 
as $t \to +\infty$.
\end{theorem}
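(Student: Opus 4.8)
The standard route for weak convergence of inertial dynamics is the Opial lemma: one must show that (a) for every $z \in \argmin \Phi$ the limit $\lim_{t\to+\infty}\|x(t)-z\|$ exists, and (b) every weak sequential cluster point of $x(t)$ as $t\to+\infty$ belongs to $\argmin \Phi$. Both steps will rely heavily on the integrability estimates accumulated in Theorem~\ref{Th_2} and Lemma~\ref{lemma3}, together with the refined rates from Theorem~\ref{CFVG}.

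\smallskip
\emph{Step 1: the anchor function and existence of $\lim \|x(t)-z\|$.} For a fixed $z \in \argmin\Phi$ I would introduce the ``Opial'' function $h(t) = \tfrac12\|x(t)-z\|^2$, compute $\dot h(t) = \langle x(t)-z,\dot x(t)\rangle$ and $\ddot h(t) = \|\dot x(t)\|^2 + \langle x(t)-z,\ddot x(t)\rangle$, and substitute $\ddot x(t)$ from \eqref{Syst}. This gives a second order differential inequality of the form
\[
\ddot h(t) + \frac{\alpha}{t}\dot h(t) \ \leq \ \|\dot x(t)\|^2 - \beta(t)\Big\langle x(t)-z,\tfrac{d}{dt}\nabla\Phi_{\lambda(t)}(x(t))\Big\rangle - b(t)\big\langle x(t)-z,\nabla\Phi_{\lambda(t)}(x(t))\big\rangle,
\]
where the last inner product is $\geq \Phi_{\lambda(t)}(x(t))-\Phi^*\geq 0$ by the convex subdifferential inequality, so that term helps. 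The Hessian-driven term must be handled by writing $\langle x(t)-z,\tfrac{d}{dt}\nabla\Phi_{\lambda(t)}(x(t))\rangle = \tfrac{d}{dt}\langle x(t)-z,\nabla\Phi_{\lambda(t)}(x(t))\rangle - \langle\dot x(t),\nabla\Phi_{\lambda(t)}(x(t))\rangle$ and controlling the resulting boundary/integral contributions via \eqref{proxineq}, \eqref{Morprox}, the rate $\|\nabla\Phi_{\lambda(t)}(x(t))\| = O(1/\lambda(t))$ and $= o(1/(t\sqrt{b(t)\lambda(t)}))$, the bound $\|\dot x(t)\| = o(1/t)$, and assumption \eqref{As_5}. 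The aim is to arrive at
\[
\ddot h(t) + \frac{\alpha}{t}\dot h(t) \ \leq \ g(t),
\]
with $\int_{t_0}^{+\infty} t\, g(t)^+\,dt < +\infty$; combined with $\int_{t_0}^{+\infty} t\|\dot x(t)\|^2\,dt<+\infty$ from Theorem~\ref{Th_2}\ref{v}, the boundedness of $x$ from Theorem~\ref{Th_2}\ref{vi}, and the integrability in Lemma~\ref{lemma3}, a standard lemma on such differential inequalities (of the type used in \cite{AC,ACFR,AL}, and presumably the ``Lemma~\ref{l}'' referenced in the Appendix) yields that $\lim_{t\to+\infty} h(t)$ exists. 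This is the step I expect to be the main obstacle, because the Hessian-driven damping term couples $\lambda(t)$, $\beta(t)$ and $b(t)$ in a way that forces one to use essentially all the hypotheses at once, and the condition \eqref{As_3} is exactly what is needed to kill the borderline contribution $\beta(t)/(t w(t))$ arising from the boundary term $\beta(t)\langle x(t)-z,\nabla\Phi_{\lambda(t)}(x(t))\rangle$ after integration against $1/t$.

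\smallskip
\emph{Step 2: cluster points are minimizers.} Let $\bar x$ be a weak sequential limit of $x(t_n)$ for some $t_n\to+\infty$. Since $\prox_{\lambda(t)\Phi}$ is nonexpansive and $\|\prox_{\lambda(t)\Phi}(x(t))-x(t)\| = o(\sqrt{\lambda(t)}/(t\sqrt{b(t)})) \to 0$ (here one uses \eqref{As_5} together with $t^2 b(t)\to+\infty$, which follows from \eqref{tbbeta} and \eqref{As_2}; if $t^2b(t)$ did not diverge one could argue separately, but under these hypotheses $tb(t)$ is bounded below by a positive constant, hence $\lambda(t)/(t b(t)) = O(\lambda(t)/t)\cdot O(1) $ stays bounded and the $o(1)$ still applies), we get that $\prox_{\lambda(t_n)\Phi}(x(t_n)) \rightharpoonup \bar x$ as well. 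By \eqref{proxPhit} and the rate $\Phi_{\lambda(t)}(x(t))-\Phi^* = o(1/(t^2 b(t)))\to 0$, we have $\Phi(\prox_{\lambda(t_n)\Phi}(x(t_n))) \to \Phi^*$. Weak lower semicontinuity of $\Phi$ then gives $\Phi(\bar x) \leq \liminf_n \Phi(\prox_{\lambda(t_n)\Phi}(x(t_n))) = \Phi^*$, so $\bar x \in \argmin\Phi$.

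\smallskip
\emph{Step 3: conclude via Opial.} With Steps 1 and 2 in hand, the Opial lemma applied to the bounded trajectory $x$ yields that $x(t)$ converges weakly, as $t\to+\infty$, to some point of $\argmin\Phi$, which is the assertion of Theorem~\ref{CT}.
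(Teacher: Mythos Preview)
There is a genuine gap: throughout Steps~1 and~2 you invoke the refined rates of Theorem~\ref{CFVG} --- the $o(1/(t^2 b(t)))$ decay of $\Phi_{\lambda(t)}(x(t))-\Phi^*$, the $o(1/t)$ decay of $\|\dot x(t)\|$, and the derived rates for $\|\nabla\Phi_{\lambda(t)}(x(t))\|$ and $\|\prox_{\lambda(t)\Phi}(x(t))-x(t)\|$. But Theorem~\ref{CT} does \emph{not} assume \eqref{As_4} or \eqref{As}, which are essential hypotheses of Theorem~\ref{CFVG}. Under the hypotheses of Theorem~\ref{CT} you are only entitled to Theorem~\ref{Th_2} and Lemma~\ref{lemma3}: the $O(1/(t^2 w(t)))$ bound \ref{ii}, the integral estimates \ref{iii}--\ref{vii}, boundedness of $x$, and \eqref{I_3}. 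The pointwise $o$-rates are simply unavailable, so neither your treatment of the Hessian boundary term in Step~1 nor your Step~2 goes through as written.

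The paper avoids Theorem~\ref{CFVG} entirely and takes a different route for Step~1. Instead of a second-order differential inequality for $h(t)=\tfrac12\|x(t)-z\|^2$, it uses that \emph{two} energy functions, $E_{\alpha-1}$ and $E_{\alpha-1-\varepsilon}$, both converge (Theorem~\ref{Th_2}\ref{i}); their difference, after discarding a term $\varepsilon t\beta(t)(\Phi_{\lambda(t)}(x(t))-\Phi^*)$ that vanishes by \eqref{R} and \eqref{As_3}, is a quantity $p(t)$ combining $\|x(t)-z\|^2$, $t\langle\dot x(t),x(t)-z\rangle$ and $t\beta(t)\langle\nabla\Phi_{\lambda(t)}(x(t)),x(t)-z\rangle$. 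One then checks that $p(t)=q(t)+\tfrac{t}{\alpha-1}\dot q(t)-(\alpha-1)\int_{t_0}^t\beta(s)\langle\nabla\Phi_{\lambda(s)}(x(s)),x(s)-z\rangle\,ds$ for a suitable $q$; the integral converges by Lemma~\ref{lemma3} together with \eqref{As_3} (this is precisely where $\beta(t)/(tw(t))\to 0$ enters), hence $q(t)+\tfrac{t}{\alpha-1}\dot q(t)$ converges, and Lemma~\ref{A} gives $\lim_{t\to\infty}q(t)$ and thus $\lim_{t\to\infty}\|x(t)-z\|$. For Step~2 the paper likewise stays at the level of \eqref{R}: from $t\beta(t)(\Phi_{\lambda(t)}(x(t))-\Phi^*)\to 0$, \eqref{proxPhit}, $\beta$ nondecreasing, and \eqref{As_5} one gets $\Phi(\prox_{\lambda(t)\Phi}(x(t)))\to\Phi^*$ and $\|\prox_{\lambda(t)\Phi}(x(t))-x(t)\|\to 0$ directly, after which your weak lower semicontinuity argument is the same.
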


\begin{proof}
Let $z \in \argmin \Phi$. Previously, in Theorem \ref{Th_2}, we established the existence of the limit of $E_c(t)$ as $t\to + \infty$ for $c = \alpha - 1$ and $c = \alpha - 1 - \varepsilon$, where $\varepsilon \in (0, \alpha-1)$ is given by \eqref{As_2}. Thus, computing the difference
\begin{align*}
E_{\alpha - 1 - \varepsilon}(t) - E_{\alpha - 1}(t) \ = \ &\varepsilon t \beta (t)(\Phi_{\lambda(t)}(x(t)) - \Phi^*) + \frac{\varepsilon (\alpha - 1)}{2} \|x(t) - z \|^2 \\
&- \varepsilon \langle (\alpha - 1) (x(t) - z) + t (\dot x(t) + \beta(t) \nabla \Phi_{\lambda(t)}(x(t)), x(t) - z \rangle\\
= \ &\varepsilon t \beta (t)(\Phi_{\lambda(t)}(x(t)) - \Phi^*) - \frac{\varepsilon (\alpha -1)}{2} \| x(t) - z \|^2\\
\ & - \ \varepsilon t \langle \dot x(t) + \beta(t) \nabla \Phi_{\lambda(t)}(x(t)), x(t) - z \rangle,
\end{align*}
we deduce that the limit of the right-hand side exists. Thanks to \eqref{R}, we derive for every $t \geq t_0$
\[
t \beta(t) (\Phi_{\lambda(t)}(x(t)) - \Phi^*) \ \leq \ t \beta(t) \frac{E_{\alpha - 1}(t_0)}{t^2 w(t)} \ = \ E_{\alpha - 1}(t_0) \frac{\beta(t)}{t w(t)}
\]
and from here, based on the assumption \eqref{As_3}, we obtain
\begin{align}\label{limitPhi}
\lim_{t \to +\infty} t \beta(t) (\Phi_{\lambda(t)}(x(t)) - \Phi^*) = 0.
\end{align}
Hereby, we derived that the limit of the quantity
\[
p(t) := \frac{\alpha - 1}{2} \| x(t) - z \|^2 + t \langle \dot{x}(t), x(t) - z \rangle + t \beta(t) \langle \nabla \Phi_{\lambda(t)}(x(t)), x(t) - z \rangle
\]
exists as $t \to +\infty$. Now we are ready to prove the existence of the limit of $\| x(t) - z \|$ as $t \to +\infty$. Denote
\[
q(t) := \frac{\alpha - 1}{2} \| x(t) - z \|^2 + (\alpha - 1)\int_{t_0}^t \beta(s) \langle \nabla \Phi_{\lambda(s)}(x(s)), x(s) - z \rangle ds \ \forall t \geq t_0.
\]
For every $t \geq t_0$, it holds that
\[
p(t )= q(t) + \frac{t}{\alpha - 1} \dot q(t) - (\alpha - 1)\int_{t_0}^t \beta(s) \langle \nabla \Phi_{\lambda(s)}(x(s)), x(s) - z \rangle ds,
\]
since 
\[
\dot q(t) = (\alpha - 1) \langle x(t) - z, \dot x(t) \rangle + \big( \alpha - 1 \big) \big( \beta(s) \langle \nabla \Phi_{\lambda(s)}(x(s)), x(s) - z \rangle \big)
\]
and
\begin{align*}
q(t) + \frac{t}{\alpha - 1} \dot q(t) \ = \ &\frac{\alpha - 1}{2} \| x(t) - z \|^2 + (\alpha - 1)\int_{t_0}^t \beta(s) \langle \nabla \Phi_{\lambda(s)}(x(s)), x(s) - z \rangle ds \\
&+ \ t \langle x(t) - z, \dot x(t) \rangle + t \big( \beta(s) \langle \nabla \Phi_{\lambda(s)}(x(s)), x(s) - z \rangle \big).
\end{align*}
By Lemma \ref{lemma3} we established that $\int_{t_0}^{+\infty} s w(s) \langle \nabla \Phi_{\lambda(s)}(x(s)), x(s) - z \rangle ds < +\infty$. In turn, \eqref{As_3} yields that
\begin{equation}\label{limit}
\lim_{t \to +\infty}  \int_{t_0}^t \beta(s) \langle \nabla \Phi_{\lambda(s)}(x(s)), x(s) - z \rangle ds \text{ exists }.
\end{equation}
Finally,
\[
\lim_{t \to +\infty} \left( q(t) + \frac{t}{\alpha - 1} \dot q(t) \right) \text{ also exists. }
\]
Applying now Lemma \ref{A} in the Appendix, we immediately get the existence of the limit of $q(t)$ as $t \to +\infty$. By the definition of $q$ and \eqref{limit} we establish the first statement of the Opial's Lemma (see Lemma \ref{O} in the Appendix), namely, that, for any $z \in \argmin \Phi$  
\[
\lim_{t \to +\infty}  \| x(t) - z \| \; \mbox{ exists}.
\]
To establish the second term of the Opial's Lemma, first note that from \eqref{proxPhit} and \eqref{limitPhi} we have, by denoting $\xi(t):=\prox_{\lambda(t)\Phi}(x(t))$, $\lim_{t \rightarrow +\infty} t\beta(t)(\Phi(\xi(t)) - \Phi^*) = 0$ and $\lim_{t \rightarrow +\infty} \frac{t\beta(t)}{\lambda(t)} \| \xi(t) - x(t) \|^2 = 0$. Using that $\beta$ is nondecreasing and assumption \eqref{As_5}, we deduce
$$ \lim_{t \to +\infty} \Phi(\xi(t)) \ = \ \Phi^* \quad \mbox{and} \quad \lim_{t \to +\infty} \| \xi(t) - x(t) \|  =0.$$
Considering a sequence $ \{ t_k \}_{k \in \mathbb N} $ such that $ \{x(t_k)\}_{k \in \mathbb N} $ converges weakly to an element $ z \in H $  as $ k \to +\infty $, we notice that  $\{\xi(t_k)\}_{k \in \mathbb N} $ converges weakly to $z$ as $ k \to +\infty $. Now, the function $ \Phi $ being convex and lower semicontinuous in the weak topology, allows us to write
\[
\Phi(z) \ \leq \ \liminf_{k \to +\infty} \Phi(\xi(t_k)) \ = \ \lim_{t \to +\infty} \Phi(\xi(t)) \ = \ \Phi^*. 
\]
Hence, $ z \in \argmin \Phi$, and the second statement of the Opial's Lemma is shown. This gives the weak convergence of the trajectory $x(t)$ to a minimizer of $\Phi$  as $t \rightarrow +\infty$.
\end{proof}

\begin{remark}\label{remark1}\rm 
In the hypotheses of Theorem \ref{CFVG}, in order to obtain the convergence of the trajectories, besides \eqref{As_5} it is enough to assume that 
$$\sup_{t \geq t_0}  \frac{\beta(t)}{t w(t)} < +\infty$$
in order to guarantee \eqref{limit}. Indeed, in this case \eqref{limitPhi} follows from the conclusion of Theorem \ref{CFVG}
$$\lim_{t \rightarrow +\infty} t \beta(t) (\Phi_{\lambda(t)}(x(t)) - \Phi^*)  \leq \lim_{t \rightarrow +\infty} t^2 b(t) (\Phi_{\lambda(t)}(x(t)) - \Phi^*) \frac{\beta(t)}{t w(t)} =0.$$
\end{remark}

\section{Polynomial choices for the system parameter functions}

According to the previous two sections, in order to guarantee both the fast convergence rates in Theorem \ref{CFVG} and the convergence of the trajectory to a minimizer of $\Phi$ in Theorem \ref{CT}, by taking also into account Remark \ref{remark1}, it is enough to make the following assumptions on the system parameter functions

\begin{enumerate}[label=(\Roman*)]

\item\label{0_I} $\alpha >1$ and there exists $\varepsilon \in (0, \alpha - 1) $ such that $ (\alpha - 3) w(t) - t \dot w(t) \geq \varepsilon b(t) $ for every $t \geq t_0$;

\item\label{II} $\beta$ and $\lambda$ are nondecreasing on $[t_0, +\infty)$;

\item\label{III} $b(t) > \dot \beta(t) + \frac{\beta(t)}{t} $ for every $t \geq t_0$;

\item\label{IV} $ \int_{t_0}^{+\infty} \left[ \frac{\beta^2(t) (\dot \lambda(t))^2 t^3}{\lambda^4(t)} - \frac{\dot \lambda(t) t^2 b(t)}{2 \lambda^2(t)} \right]_+ dt < +\infty $;

\item\label{V} there exists $C > 0 $ such that $ \frac{d}{dt} \left( t^2 b(t) \right) \ \leq \ C t b(t) $ for every $t \geq t_0$;

\item\label{VI} $ \sup_{t \geq t_0}  \frac{\beta(t)}{t w(t)} < +\infty$;

\item\label{VII} $ \sup_{t \geq t_0} \frac{\lambda(t)}{t} \ < \ +\infty$.
\end{enumerate}

In this section we will investigate the fulfillment of these conditions for
$$b(t) = b t^n, \quad \beta(t) = \beta t^m \quad \mbox{and} \quad \lambda(t) = \lambda t^l,$$ 
where $n,m,l \in \mathbb{R}$, $b, \lambda >0$ and $\beta \geq 0$.

For this choice of $b$, condition \ref{V} is fulfilled.

We assume first that $\beta =0$. Then the conditions \ref{III}, \ref{IV} and \ref{VI} are fulfilled, while the conditions \ref{II} and \ref{VI} are nothing else than $0 \leq l \leq 1$. Condition \ref{0_I} asks for $\alpha >1$ and for the existence of $\varepsilon \in (0, \alpha - 1)$ such that for every $t \geq t_0$
$$(\alpha - 3 - n - \varepsilon)bt^{n} \geq 0$$
or, equivalently, $\alpha - 3 - n \geq \varepsilon$. To this end it is enough to have that $\alpha-3 >n$.

In case $\beta >0$, conditions \ref{II} and \ref{VII} are nothing else than $m \geq 0$ and $0 \leq l \leq 1$. Condition \ref{III} reads for every $t \geq t_0$
\[
b t^n > m \beta t^{m-1} + \beta t^{m-1} = (m + 1) \beta t^{m-1},
\]
or, equivalently,
\[
t^{n - m + 1} > \frac{(m + 1) \beta}{b}.
\]
From here we get
\[
0 \leq m \leq n + 1.
\]
and $b > (m + 1) \beta t_0^{m - 1 - n}$. 

Condition \ref{VI}  requires that
\[
\sup_{t \geq t_0 }\frac{\beta t^m}{t (b t^n - \beta m t^{m-1} - \beta t^{m-1})} = \sup_{t \geq t_0} \frac{\beta t^m}{b t^{n+1} - \beta t^m (m + 1)} < +\infty
\]
and it is obviously fulfilled.

Condition \ref{0_I} asks for $\alpha >1$ and for the existence of $\varepsilon \in (0, \alpha - 1)$ such that for every $t \geq t_0$
\[
(\alpha - 3) (b t^n - \beta m t^{m-1} - \beta t^{m-1}) - t (b n t^{n-1} -\beta m (m - 1) t^{m-2} - \beta (m - 1) t^{m-2}) \geq \varepsilon b t^n.
\]
After simplification we obtain that for every $t \geq t_0$
\[
(\alpha - 3 - n - \varepsilon)b t^n + \beta (m + 1) (m + 2 -\alpha) t^{m-1} \geq 0
\]
or, equivalently,
\[
(\alpha - 3 - n - \varepsilon)b t^{n - m + 1} \geq \beta (m + 1) (\alpha - m - 2).
\]
On the one hand we have $m = n + 1$ and $(\alpha - 3 -n) \left( 1 - \frac{\beta(n + 2)}{b} \right) > \varepsilon$, which requires that $\alpha - 3 - n > 0$. On the other hand, we have $m < n + 1$, which also requires that $\alpha - 3 - n > 0$.

Consequently,  we have to assume that
\[
\alpha - 3 > n \quad \mbox{and} \quad b > \frac{\beta (m + 1) (\alpha - m - 2)}{(\alpha - 3 - n) t_0^{n - m + 1}}.
\]
In this case, there will be always an $\varepsilon \in (0, \alpha - 1)$ such that $\alpha - 3 - n - \varepsilon > 0$ and
\[
b > \frac{\beta (m + 1) (\alpha - m - 2)}{(\alpha - 3 - n - \varepsilon) t_0^{n - m + 1}} > \frac{\beta (m + 1) (\alpha - m - 2)}{(\alpha - 3 - n) t_0^{n - m + 1}},
\]
in other words, which satisfies condition \ref{0_I}.

Finally, let us  have a closer look at condition \ref{V}. This reads as
\[
\int_{t_0}^{+\infty} \left[ \frac{\beta^2 t^{2m} (l \lambda)^2 t^{2l - 2} t^3}{\lambda^4 t^{4l}} - \frac{l \lambda t^{l-1} t^2 b t^n}{2 \lambda^2 t^{2l}} \right]_+ dt < +\infty
\]
or, equivalently,
\[
\int_{t_0}^{+\infty} \left[ \left( \frac{\beta l }{\lambda} \right)^2 t^{2m -2l + 1} - \frac{l b}{2 \lambda} t^{n - l + 1} \right]_+ dt = \int_{t_0}^{+\infty} \left[ \left( \frac{\beta l }{\lambda} \right)^2 - \frac{l b}{2 \lambda} t^{n + l - 2m} \right]_+ t^{2m -2l + 1}  dt < +\infty.
\]

1. In case
\[
n + l > 2m,
\]
 there exists $t_1 \geq t_0$ such that for every $t \geq t_1$
\[
\left( \frac{\beta l }{\lambda} \right)^2 - \frac{l b}{2 \lambda} t^{n + l - 2m} \leq 0.
\]
Therefore, we obtain
\begin{align*}
&\int_{t_0}^{+\infty} \left[ \left( \frac{\beta l }{\lambda} \right)^2 - \frac{l b}{2 \lambda} t^{n + l - 2m} \right]_+ t^{2m -2l + 1} dt \\ 
= & \int_{t_0}^{t_1} \left[ \left( \frac{\beta l }{\lambda} \right)^2 - \frac{l b}{2 \lambda} t^{n + l - 2m} \right]_+ t^{2m -2l + 1} dt + \ \int_{t_1}^{+\infty} \left[ \left( \frac{\beta l }{\lambda} \right)^2 - \frac{l b}{2 \lambda} t^{n + l - 2m} \right]_+ t^{2m -2l + 1} dt \\
= & \int_{t_0}^{t_1} \left[ \left( \frac{\beta l }{\lambda} \right)^2 - \frac{l b}{2 \lambda} t^{n + l - 2m} \right]_+ t^{2m -2l + 1} dt 
< +\infty,
\end{align*}
thus \ref{V} is fulfilled.

2. In case 
\begin{equation*}
n + l < 2m,
\end{equation*}
there exist $\delta >0$ and $t_2 \geq t_0$ such that for all $t \geq t_2$
\[
\left( \frac{\beta l }{\lambda} \right)^2 - \frac{l b}{2 \lambda} t^{n + l - 2m} > \delta.
\]
Taking into account that $2m-2l+1 >n + 1 - l \geq -1$, we have
\begin{align*}
& \int_{t_0}^{+\infty} \left[ \left( \frac{\beta l }{\lambda} \right)^2 - \frac{l b}{2 \lambda} t^{n + l - 2m} \right]_+ t^{2m -2l + 1} dt \\ 
 \geq & \ \int_{t_0}^{t_2} \left[ \left( \frac{\beta l }{\lambda} \right)^2 - \frac{l b}{2 \lambda} t^{n + l - 2m} \right]_+ t^{2m -2 l + 1} dt + \delta \int_{t_2}^{+\infty} t^{2m - 2l + 1} dt \ = +\infty,
\end{align*}
thus \ref{V} is not fulfilled.

3. It is only left to consider the case 
$$n + l = 2m.$$

Condition \ref{V} becomes
\[
\int_{t_0}^{+\infty} \left[ \left( \frac{\beta l }{\lambda} \right)^2 - \frac{l b}{2 \lambda} \right]_+ t^{2m - 2l + 1} dt < +\infty.
\]
If $b \geq \frac{2 \beta^2 l }{\lambda}$, then it is fulfilled. Otherwise, since $2m - 2l + 1 = n-l+1 \geq -1$, it is not fulfilled.

Summarising, all convergence statements in Theorem \ref{CFVG} and Theorem \ref{CT} hold in the two settings

\begin{enumerate}
\item $\alpha > 1$, $\beta = 0$, $\alpha - 3 > n$, $0 \leq l \leq 1$, and $b, \lambda > 0$;

\item $\alpha > 1$, $\beta > 0$, $\alpha - 3 > n$, $0 \leq l \leq 1$, $0 \leq m \leq n + 1$, $b > \frac{(m + 1) (\alpha - m - 2) \beta}{(\alpha - 3 - n)t_0^{n - m + 1}}$, $\lambda > 0$, and either $2m < n + l$, or $2m = n + l$ and $b \geq \frac{2l \beta^2}{\lambda}$.
\end{enumerate}

\begin{remark}\label{remark2}\rm
Theorem \ref{CFVG} is providing for the choices $b(t) = b t^n$ and $\lambda(t) = \lambda t^l$ the following convergence rates
\[
\Phi(\prox\nolimits_{\lambda(t)\Phi}(x(t))) - \Phi^* = o \left( \frac{1}{t^{n+2}} \right), \quad
\|\prox\nolimits_{\lambda(t)\Phi}(x(t)) - x(t) \| \ = \ o \left( \frac{1}{t^{\frac{n}{2} + 1 - \frac{l}{2}}} \right)
\]
and 
\[
\| \nabla \Phi_{\lambda(t)} (x(t)) \| \ = \ o \left( \frac{1}{t^{\frac{n}{2} + 1 + \frac{l}{2}}} \right),
\]
as $t \to +\infty$. Clearly, the bigger the $n$ is the faster the convergence is. On the other hand, concerning the exponent $l$ things are a bit more complicated: we may gain in one case, but inevitably lose in the other. Interesting case is when $l = 0$, which corresponds to $\lambda$ being a constant function. In this case, one can notice a balance between accelerating the convergence of $\| \nabla \Phi_{\lambda(t)} (x(t)) \|$ and slowing the latter for $\|\prox\nolimits_{\lambda(t)\Phi}(x(t)) - x(t) \|$, since none of them are affected by $l$ anymore.

\end{remark}

\section{Numerical examples}

In this section we will conduct series of experiments to investigate the influence of the system parameters $\lambda$, $\beta$ and $b$ on the convergence behaviour of dynamical system. We will successively fix two of them and vary the last one in order to do so. For the numerical experiments we will restrict ourselves to the polynomial choices addressed in the previous section $\lambda(t) = t^l$, $\beta(t) = t^m$, $b(t) = bt^n$ with $b = \frac{(m + 1) (\alpha - m - 2) \beta}{(\alpha - 3 - n) t_0^{n - m + 1}} + 1$, as well as $x(t_0) = x_0 = 10$, $\dot x(t_0) = 0$,  and $t_0 = 1$.

\subsection{The influence of $b$ on the dynamical behaviour}

First let us choose as objective function $\Phi : \mathbb{R} \rightarrow \mathbb{R}_+, \Phi(x) = |x|$, fix $m = 0$, $\alpha = 9$ and $l = 1$, and vary $n$. 

\begin{figure}[H]
     \centering
     \begin{subfigure}[b]{0.32\textwidth}
         \centering
         \includegraphics[width=\textwidth]{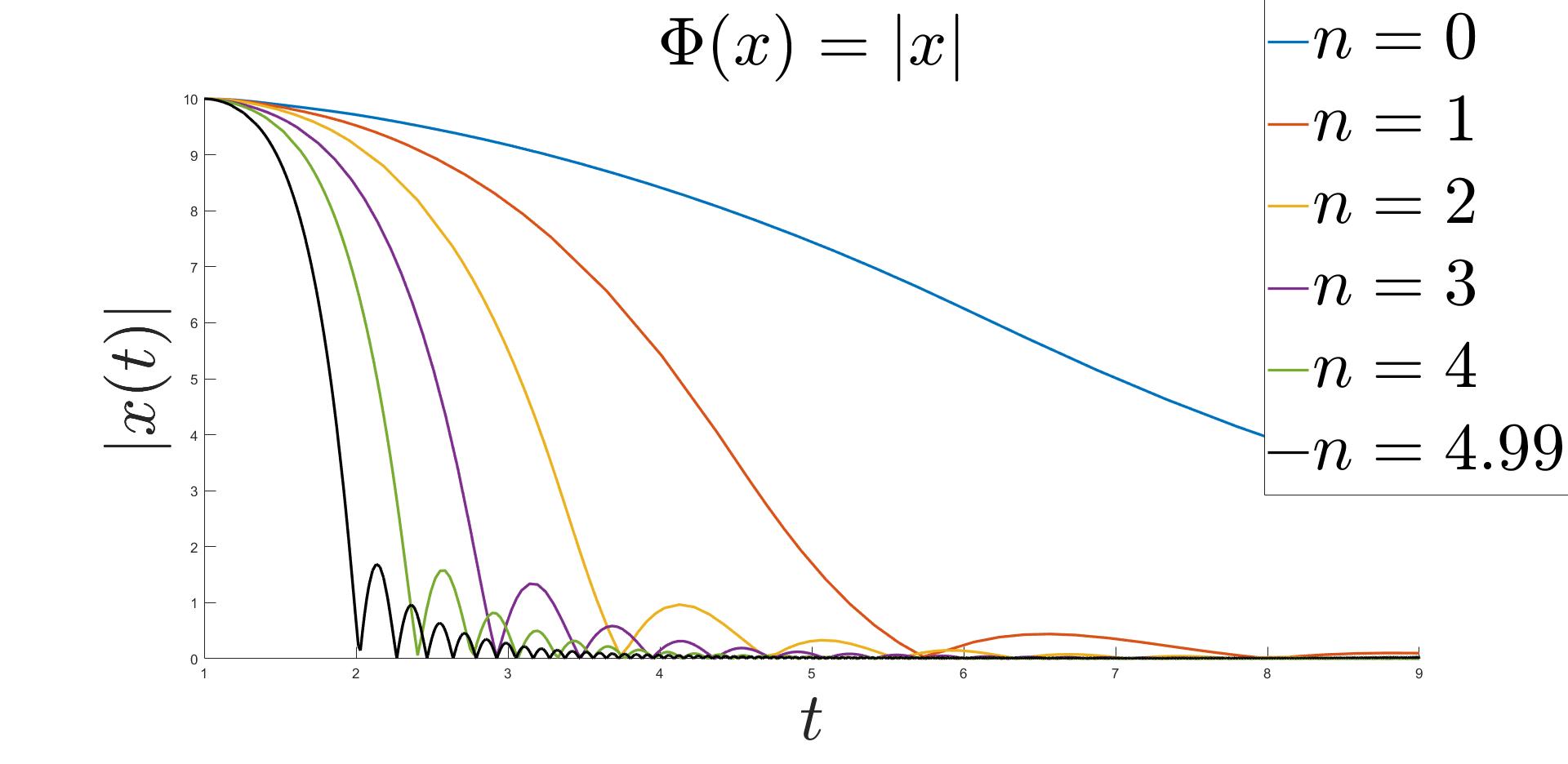}
         \caption{Trajectories}
         %\label{}
     \end{subfigure}
     \hfill
     \begin{subfigure}[b]{0.32\textwidth}
         \centering
         \includegraphics[width=\textwidth]{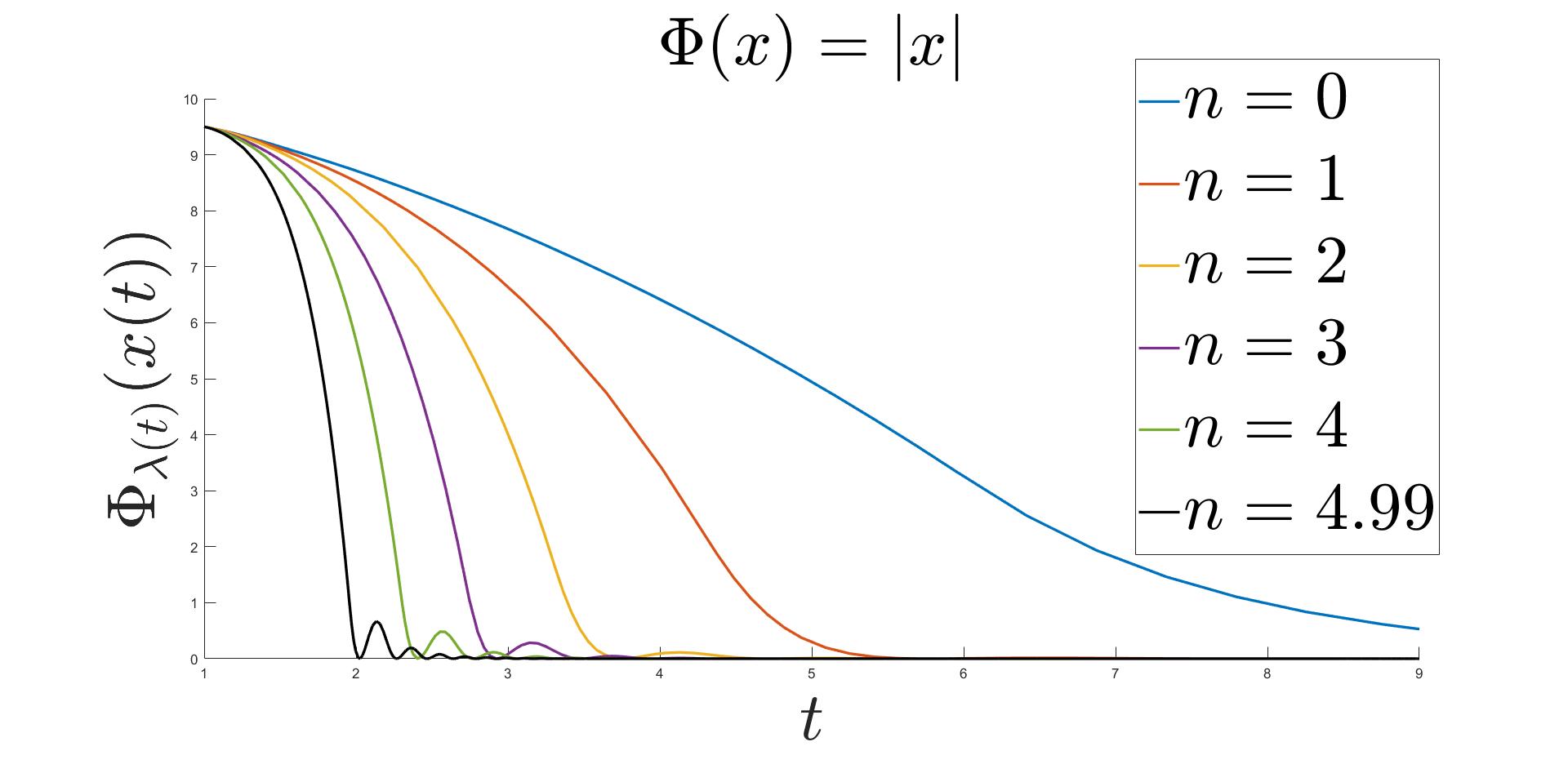}
         \caption{Moreau envelope values}
         %\label{}
     \end{subfigure}
     \hfill
     \begin{subfigure}[b]{0.32\textwidth}
         \centering
         \includegraphics[width=\textwidth]{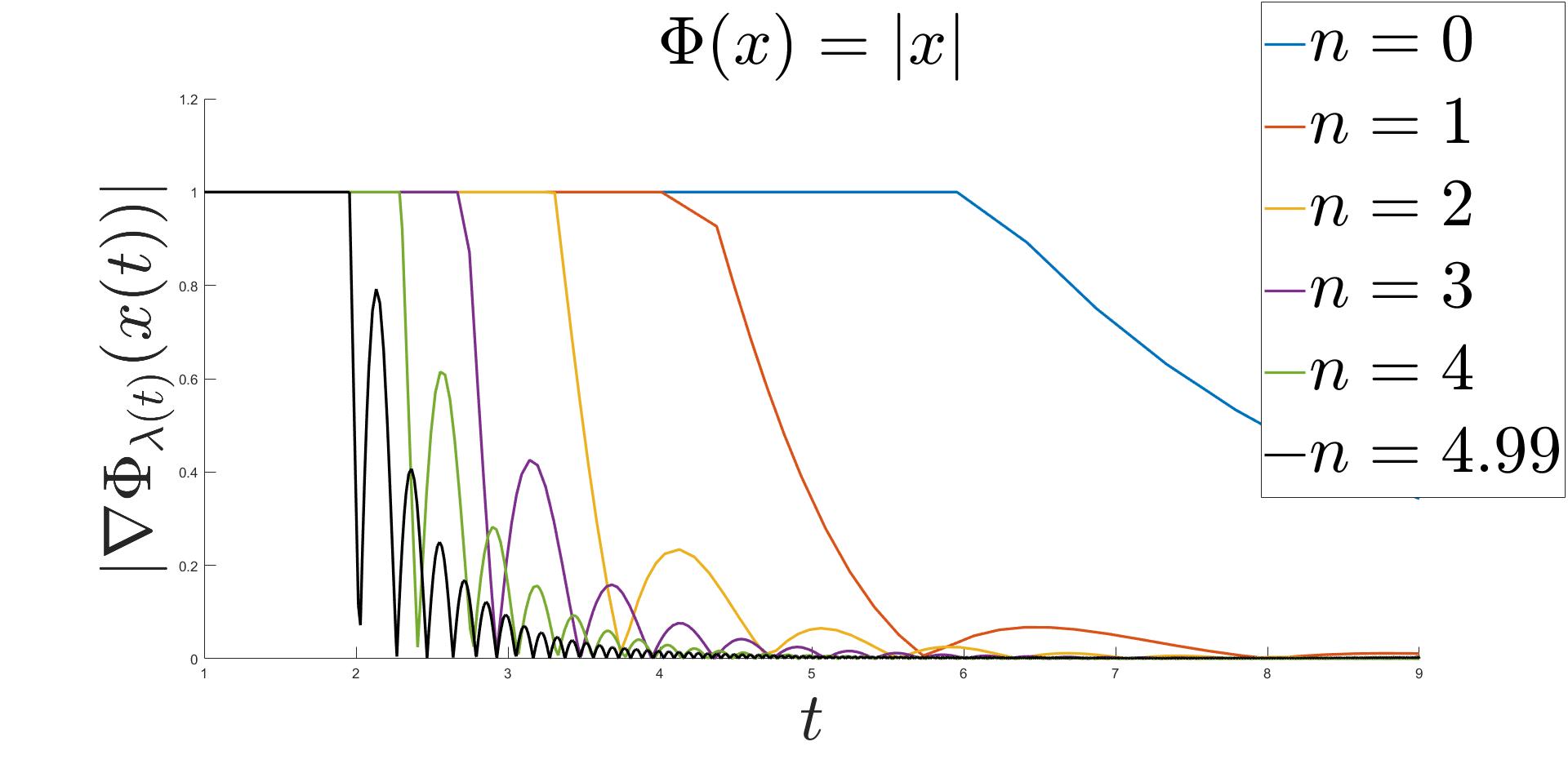}
         \caption{Moreau envelope gradient}
         %\label{}
     \end{subfigure}
        \caption{$m = 0$, $\alpha = 9$ and $l = 1$}
        %\label{}
\end{figure}

In Figure 1 we clearly see that the faster the exponent of the function $b$ grows the faster the convergence of the function values of the Moreau envelope and its gradient are, starting with the slowest pace for $n = 0$ and accelerating until $n = 4.99$, confirming the theoretical convergence rates. In addition, the increase in the exponent of $b$ seems to improve the convergence behaviour of the trajectory, too.  Fast growing exponents for $b$ will improve the convergence greatly, however, as seen in the previous section, they are limited by the upper bound value $\alpha-3$.

\subsection{The influence of $\lambda$ on the dynamical behaviour}

For the same objective function as in the previous subsection, we study the behaviour of the dynamics when varying the exponent $l$ to investigate the influence of the function $\lambda$. To this end we fix $m = 0$, $\alpha = 9$ and $n = 5 < \alpha -3$, and take for $l$ three different values from $0$ to $1$. 

\begin{figure}[H]
     \centering
     \begin{subfigure}[b]{0.32\textwidth}
         \centering
         \includegraphics[width=\textwidth]{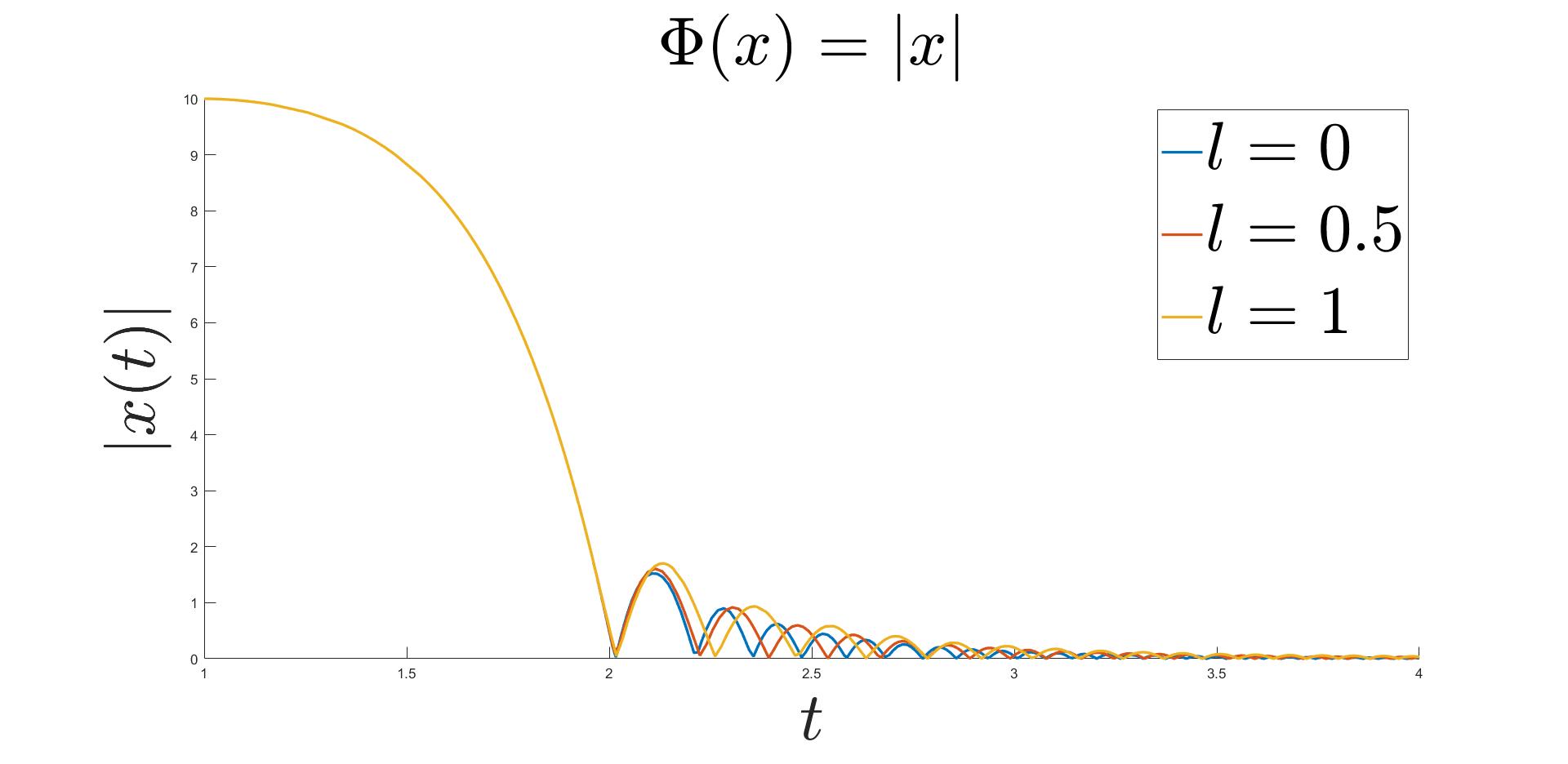}
         \caption{Trajectories}
         %\label{}
     \end{subfigure}
     \hfill
     \begin{subfigure}[b]{0.32\textwidth}
         \centering
         \includegraphics[width=\textwidth]{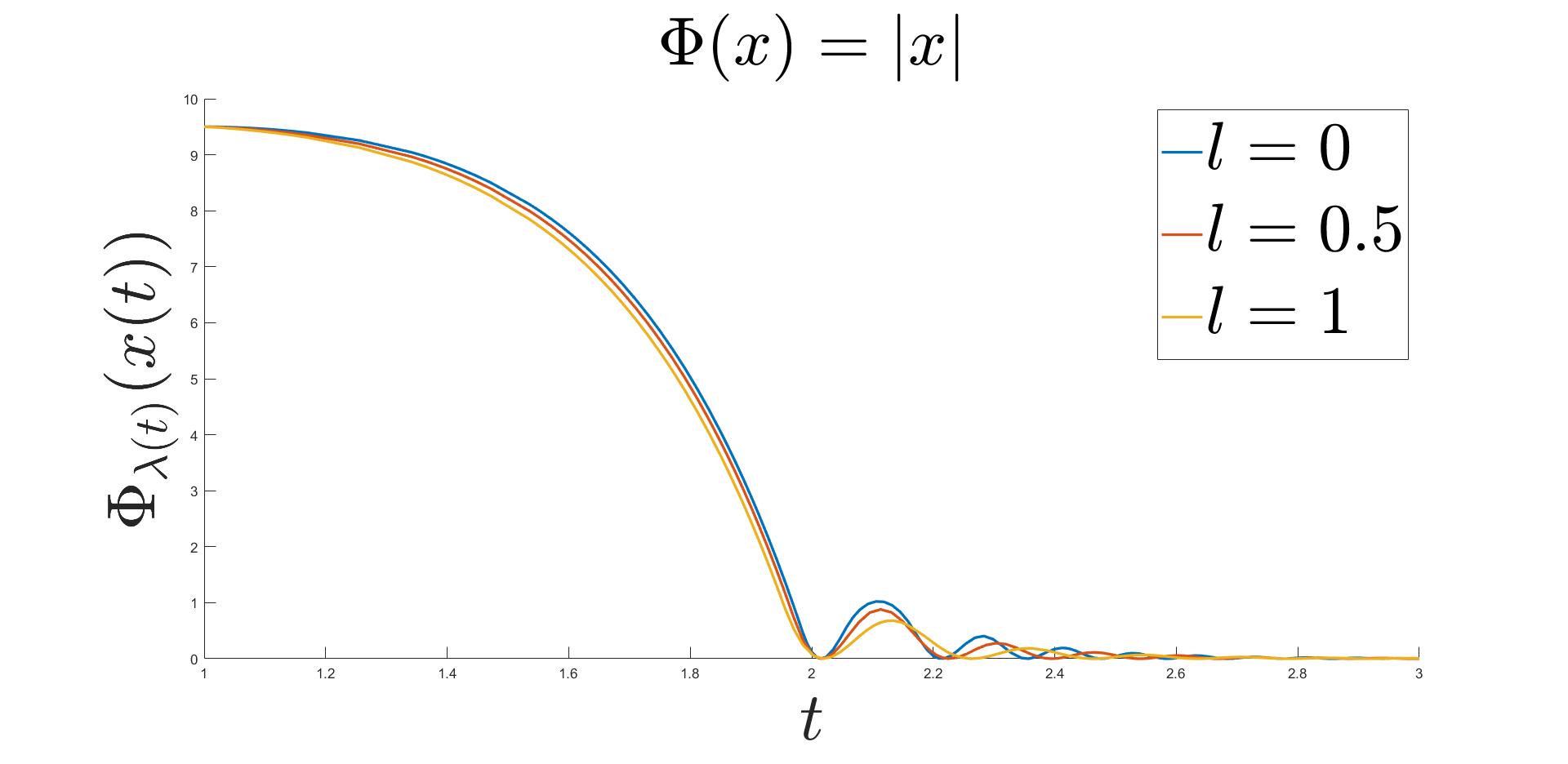}
         \caption{Moreau envelope values}
         %\label{}
     \end{subfigure}
     \hfill
     \begin{subfigure}[b]{0.32\textwidth}
         \centering
         \includegraphics[width=\textwidth]{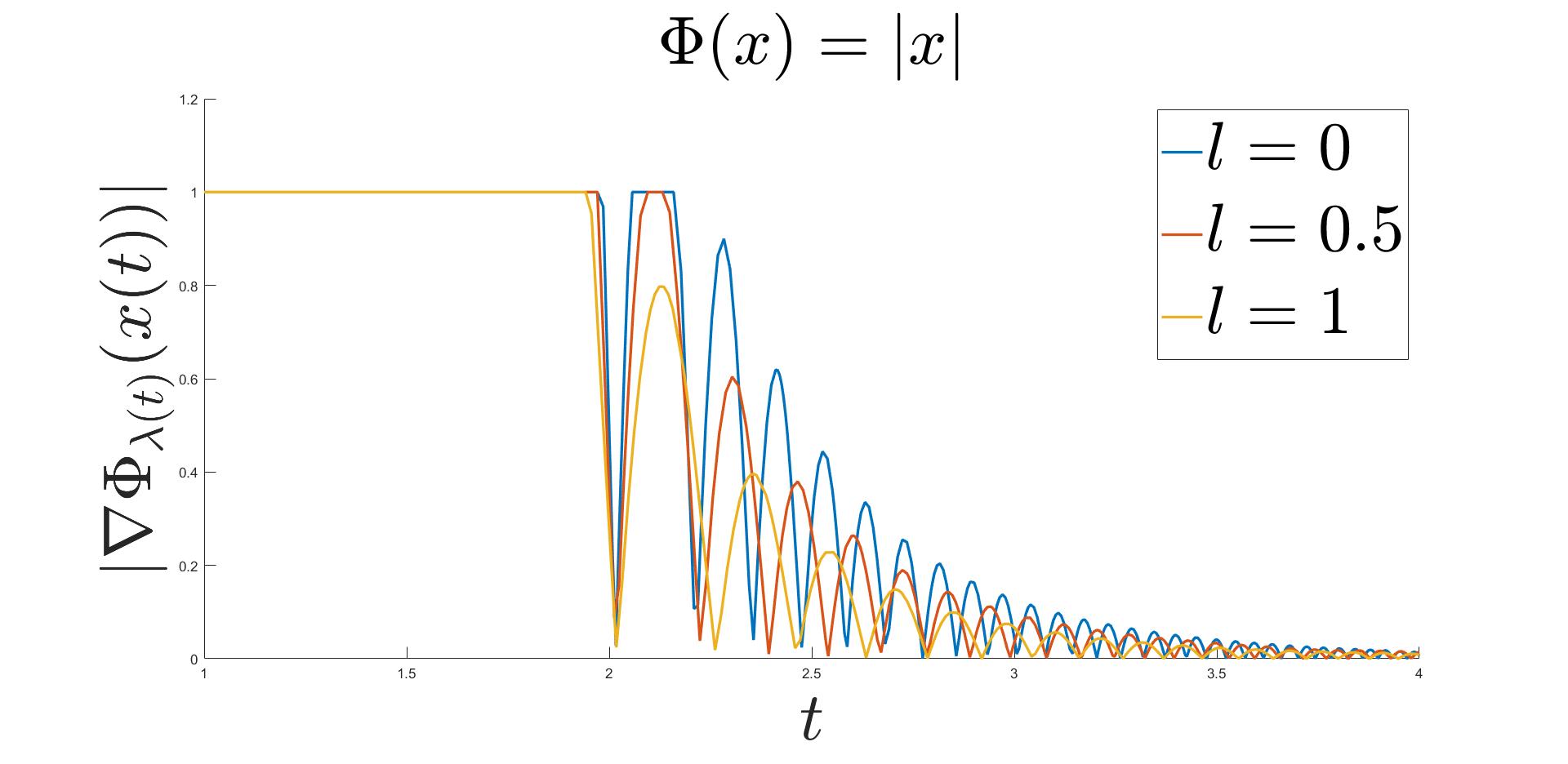}
         \caption{Moreau envelope gradient}
         %\label{}
     \end{subfigure}
        \caption{$m = 0$, $\alpha = 9$ and $n = 5$}
        %\label{}
\end{figure}

One can notice in Figure 2 that the convergence behaviour of the functions values of the Moreau envelope and its gradient is better the higher $l$ is, whereas, interestingly enough, for the convergence of the trajectories an opposite phenomenon takes place.  

\subsection{The influence of $\beta$ on the dynamical behaviour}

Let $\Phi : \mathbb{R} \rightarrow \mathbb{R}_+, \Phi(x) = |x| + \frac{x^2}{2}$, $\alpha = 13$, $n = 9 < \alpha -3$ and $l=1$. We vary the exponent $m$ such that $2m < n+l$ to study  the influence of the function $\beta$ on the convergence behaviour of the system.
\begin{figure}[H]
     \centering
     \begin{subfigure}[b]{0.32\textwidth}
         \centering
         \includegraphics[width=\textwidth]{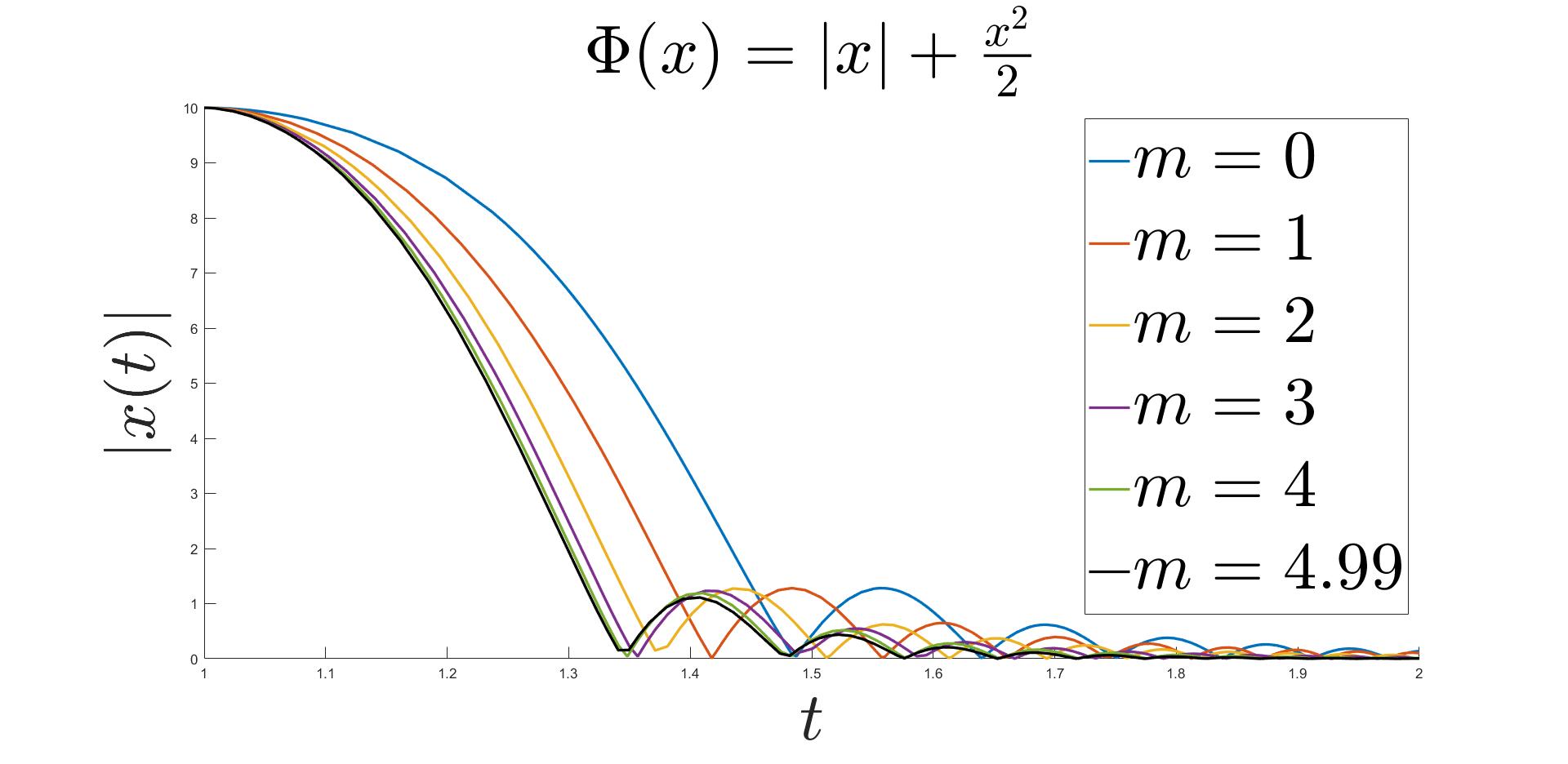}
         \caption{Trajectories}
         %\label{}
     \end{subfigure}
     \hfill
     \begin{subfigure}[b]{0.32\textwidth}
         \centering
         \includegraphics[width=\textwidth]{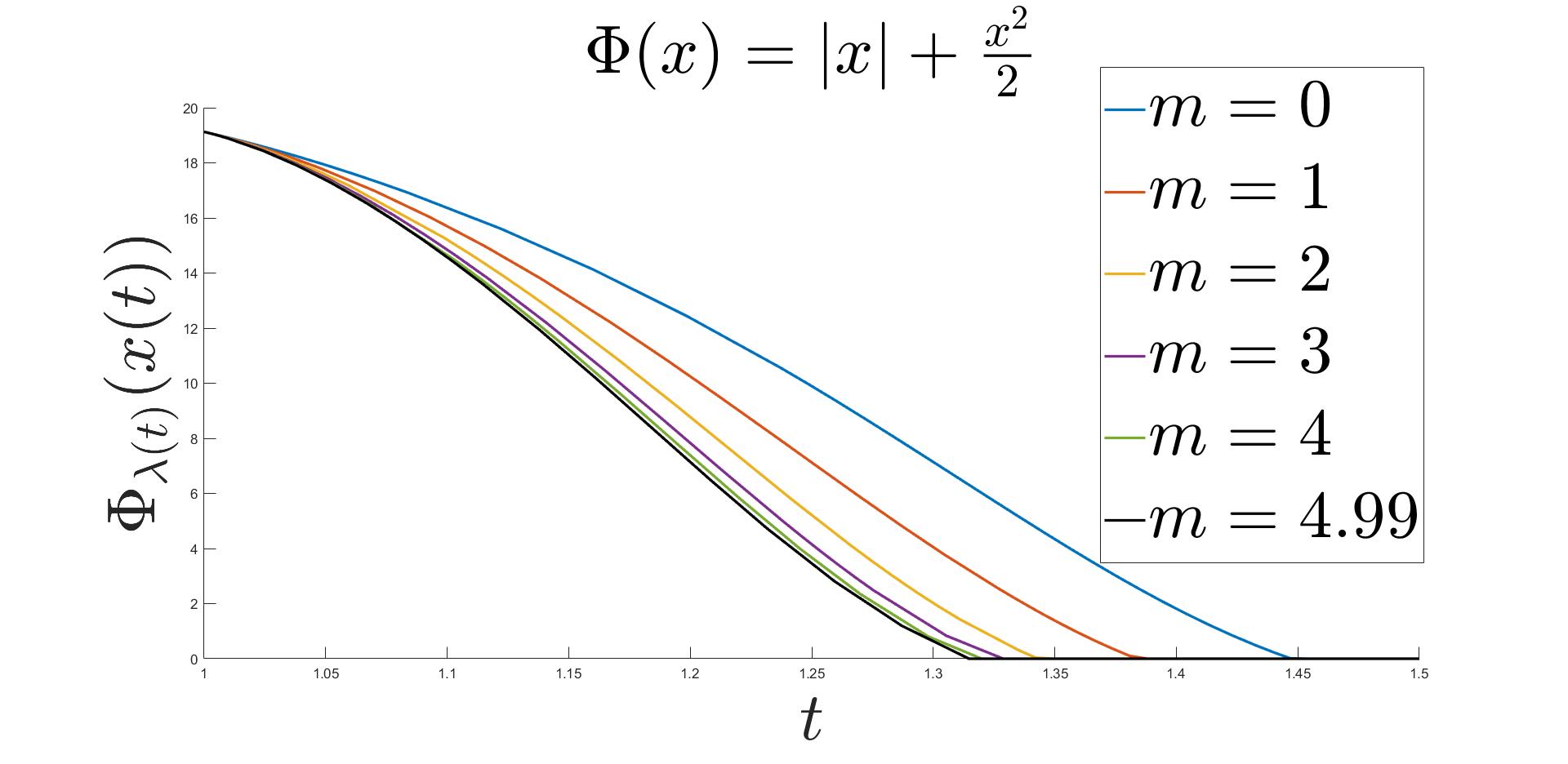}
         \caption{Moreau envelope values}
         %\label{}
     \end{subfigure}
     \hfill
     \begin{subfigure}[b]{0.32\textwidth}
         \centering
         \includegraphics[width=\textwidth]{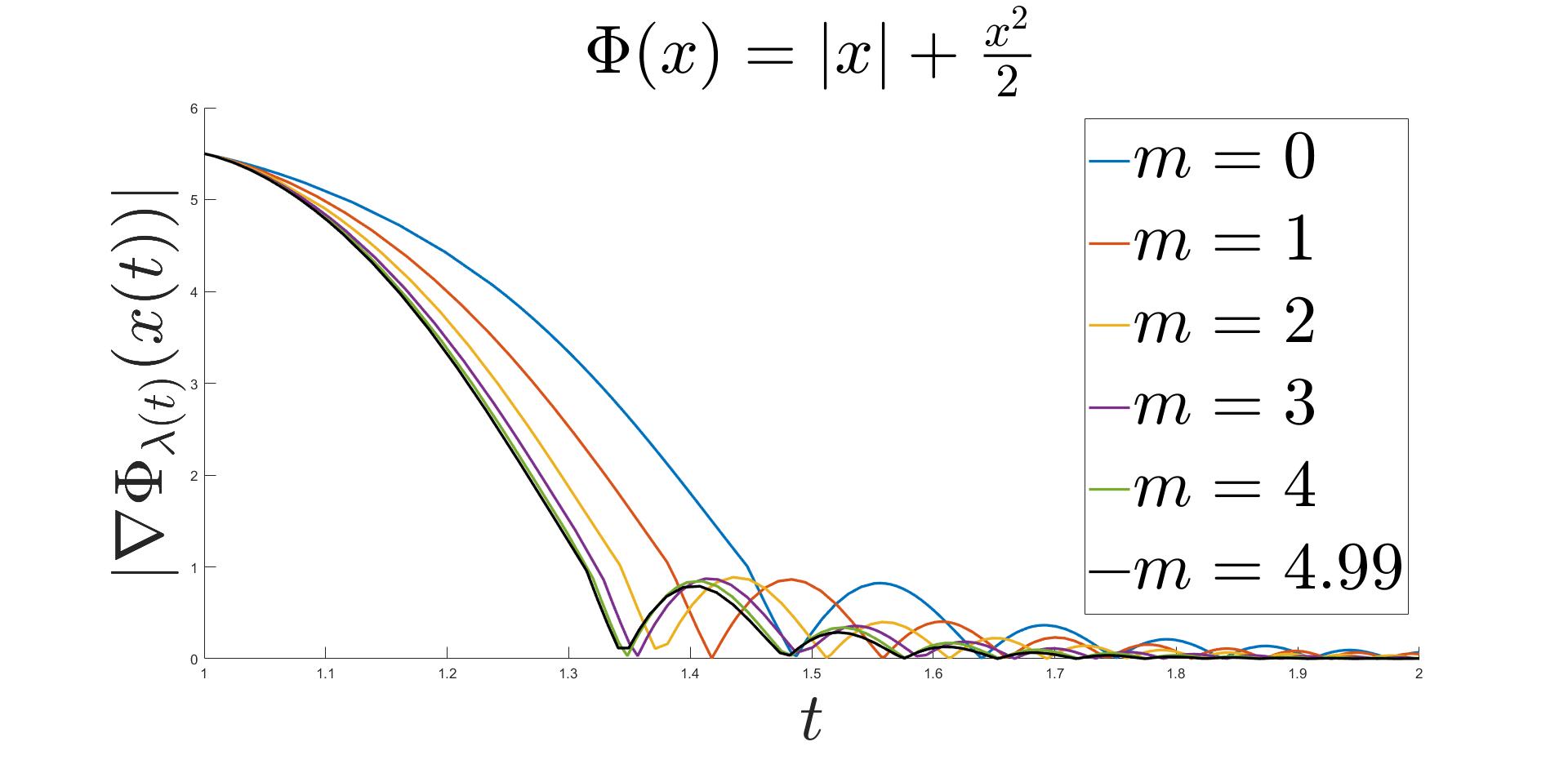}
         \caption{Moreau envelope gradient}
         %\label{}
     \end{subfigure}
        \caption{$n = 9$, $\alpha = 13$ and $l = 1$}
        %\label{}
\end{figure}
In Figure 3 we see that, even though $m$ does not explicitly appear in the theoretical convergence rates for the gradient of the Moreau envelope and the trajectory of the system, it influences the convergence behaviour of both of them as well as of the function values of the Moreau envelope, in the sense that these are faster the higher the values of $m$ are.

\begin{figure}[H]
     \centering
     \begin{subfigure}[b]{0.49\textwidth}
         \centering
         \includegraphics[width=\textwidth]{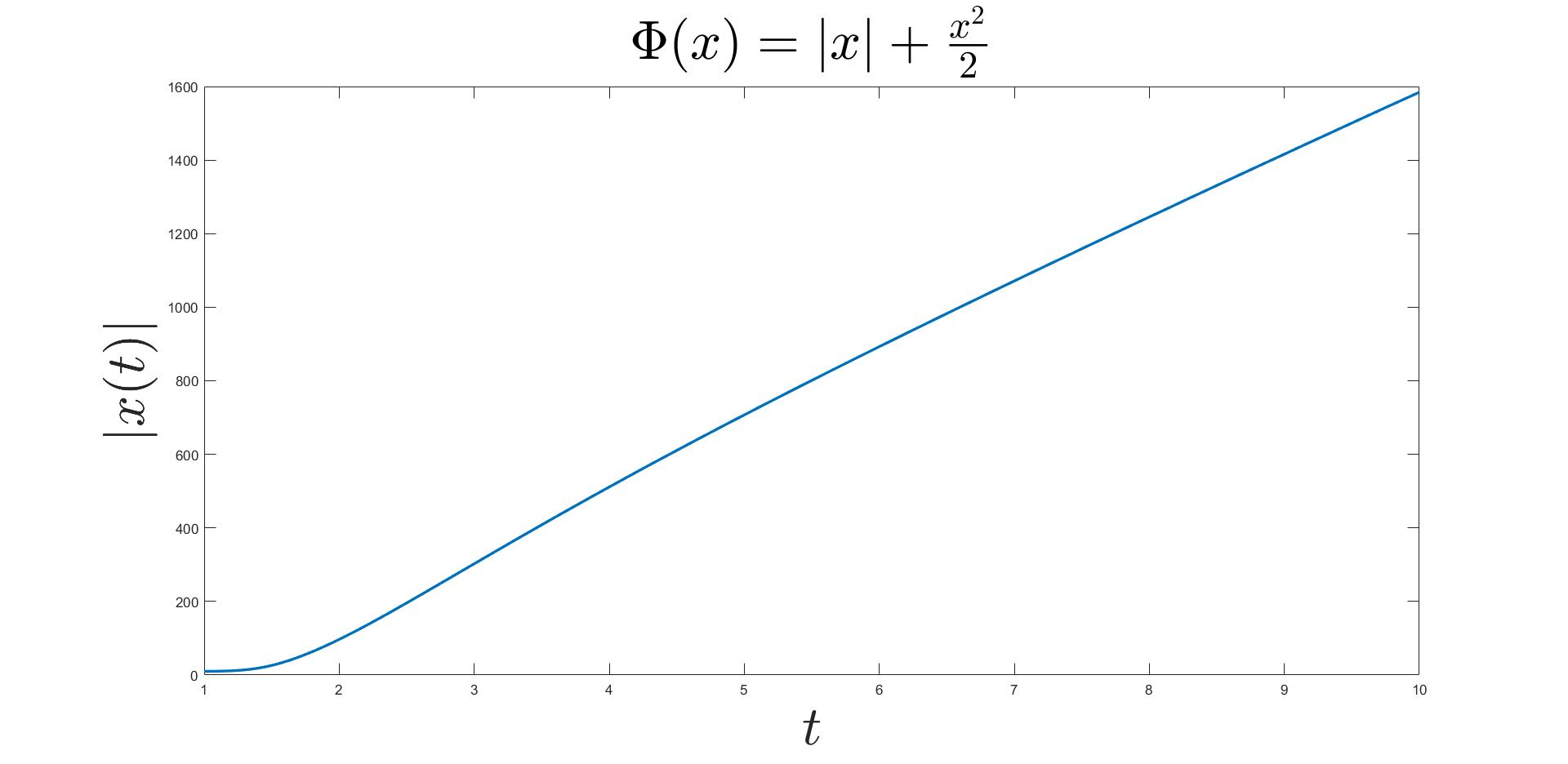}
         \caption{$n = 9$, $\alpha = 13$, $l = 1$ and $m = 12$}
         %\label{}
     \end{subfigure}
     \hfill
     \begin{subfigure}[b]{0.49\textwidth}
         \centering
         \includegraphics[width=\textwidth]{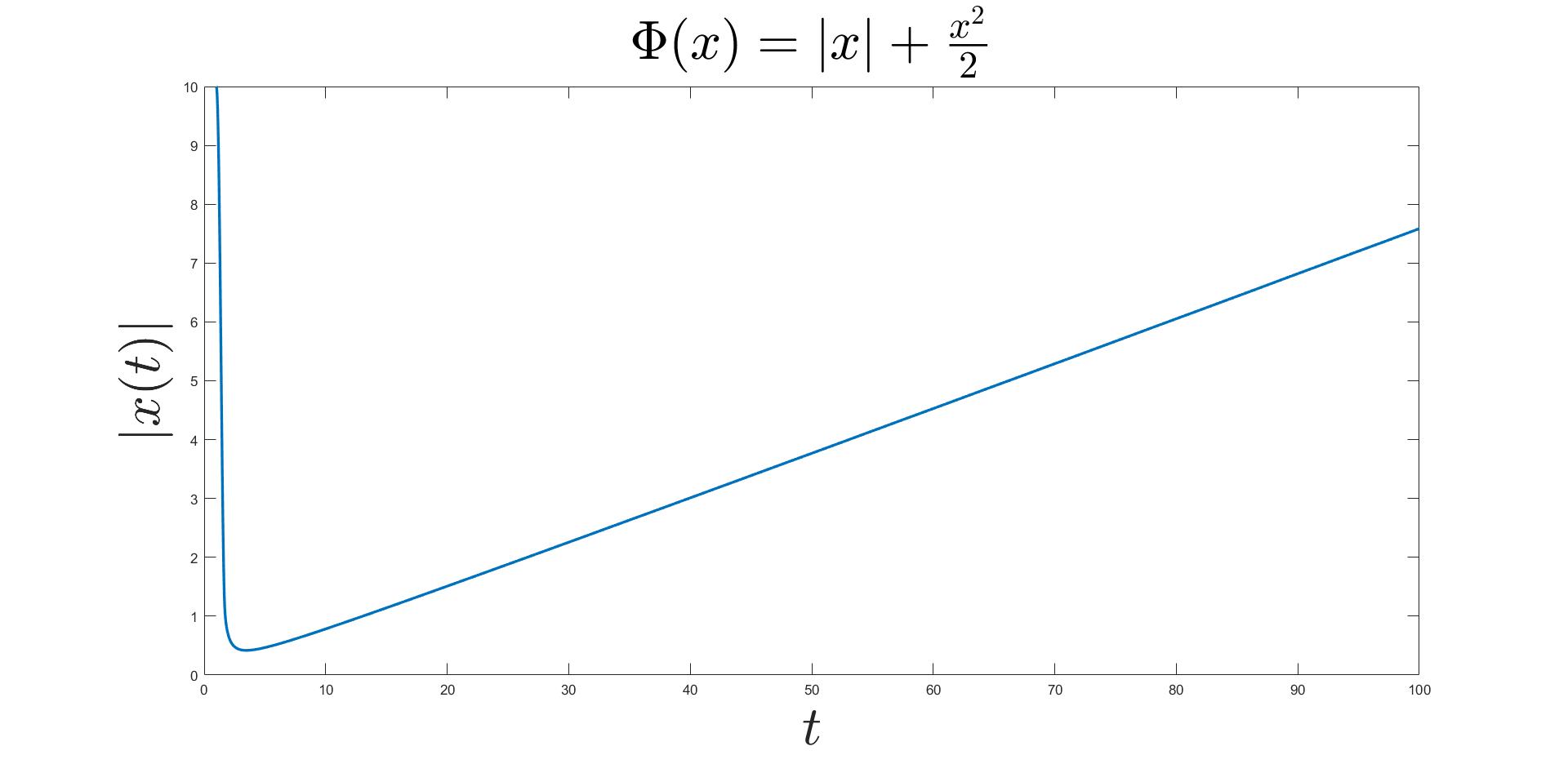}
         \caption{$n = 4$, $\alpha = 2$, $l = 4$ and $m = 6$}
         %\label{}
     \end{subfigure}
        \caption{Divergence of the trajectories}
        %\label{}
\end{figure}

Finally, we consider two parameter choices which lie outside the convergence setting derived in the previous section and notice that these fundamentally affects the convergence of the trajectory. In Figure 4 (a) we choose $m$ such that that condition $2m < n+l$ is violated, and in Figure (b) we choose $\alpha$ and $n$ such that the condition $\alpha - 3 > n$ is also violated. One can see that in both settings the trajectories diverge.

\section*{Appendix}

In this appendix we collect some lemmas which play an important role in the proof of the main results of the paper. For the proof of the following lemma we refer to \cite{AAS}.

\begin{lemma}\label{l}
Suppose that $f: [t_0, +\infty) \to \mathbb{R}$ is locally absolutely continuous and bounded from below and there exists $g \in L^1([t_0, +\infty), \mathbb{R})$ such that for almost all $t \geq t_0$
\[
\frac{d}{dt} f(t) \ \leq \ g(t).
\]
Then there exists $\lim_{t \to +\infty} f(t) \in \mathbb{R}$.
\end{lemma}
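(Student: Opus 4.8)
The plan is to reduce the assertion to the elementary fact that a nonincreasing real function which is bounded from below has a finite limit at infinity. The key device is the auxiliary function
\[
h : [t_0, +\infty) \to \mathbb{R}, \qquad h(t) = f(t) - \int_{t_0}^t g(s)\, ds,
\]
obtained by subtracting off the ``bad'' part of the derivative. Since $g \in L^1([t_0, +\infty), \mathbb{R})$, the map $t \mapsto \int_{t_0}^t g(s)\, ds$ is locally absolutely continuous, uniformly bounded on $[t_0, +\infty)$, and converges to the finite value $G := \int_{t_0}^{+\infty} g(s)\, ds$ as $t \to +\infty$; hence $h$ is locally absolutely continuous as well.

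First I would check that $h$ is nonincreasing. For almost every $t \geq t_0$ one has $\dot h(t) = \dot f(t) - g(t) \leq 0$ by the hypothesis $\dot f(t) \leq g(t)$, and since $h$ is locally absolutely continuous, the fundamental theorem of calculus yields $h(t_2) - h(t_1) = \int_{t_1}^{t_2} \dot h(s)\, ds \leq 0$ whenever $t_0 \leq t_1 \leq t_2$. Next I would note that $h$ is bounded from below: $f$ is bounded from below by assumption and $\int_{t_0}^t g(s)\, ds$ is bounded from above uniformly in $t$, so the difference $h$ is bounded from below on $[t_0, +\infty)$.

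Being nonincreasing and bounded from below, $h$ admits a finite limit $\ell := \lim_{t \to +\infty} h(t) \in \mathbb{R}$. Writing $f(t) = h(t) + \int_{t_0}^t g(s)\, ds$ and letting $t \to +\infty$, I obtain $\lim_{t \to +\infty} f(t) = \ell + G \in \mathbb{R}$, which is the desired conclusion. I do not expect any genuine obstacle in this argument; the only point that merits a careful word is the implication ``$\dot h \leq 0$ almost everywhere $\Rightarrow$ $h$ nonincreasing,'' which relies precisely on the absolute continuity of $h$ — so that $h$ equals the integral of its derivative — and not on any finer regularity.
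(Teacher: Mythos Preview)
Your argument is correct and is the standard proof of this result. Note, however, that the paper does not actually give its own proof of this lemma: it simply states the lemma in the Appendix and refers to \cite{AAS} for the proof. So there is no comparison to make with the paper's approach --- your self-contained argument via the auxiliary function $h(t) = f(t) - \int_{t_0}^t g(s)\,ds$ is exactly the expected one, and the care you take in invoking absolute continuity to pass from $\dot h \leq 0$ a.e.\ to $h$ nonincreasing is appropriate.
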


For the proof of the following lemma we refer to \cite{APR}.

\begin{lemma}\label{A}
Let $H$ be a real Hilbert space and $x : [t_0, +\infty) \longrightarrow \mathbb{H}$ a continuously differentiable function satisfying $ x(t) + \frac{t}{\alpha} \dot x(t) \to \ L $ as $ t \to +\infty $, with $ \alpha > 0 $ and $ L \in \mathbb{H} $. Then $ x(t) \to L $ as $ t \to +\infty $.
\end{lemma}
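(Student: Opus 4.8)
The plan is to integrate the first order relation explicitly by means of an integrating factor and then recognize $x(t)$ as a weighted average of the quantity $y(t):=x(t)+\tfrac{t}{\alpha}\dot x(t)$, which by hypothesis tends to $L$; a Ces\`aro-type argument then transfers the limit to $x$.

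First I would rewrite the assumption as $\alpha x(t)+t\dot x(t)=\alpha y(t)$ with $y(t)\to L$, and multiply both sides by the integrating factor $t^{\alpha-1}$. Since $\frac{d}{dt}\big(t^{\alpha}x(t)\big)=\alpha t^{\alpha-1}x(t)+t^{\alpha}\dot x(t)=t^{\alpha-1}\big(\alpha x(t)+t\dot x(t)\big)$, this gives $\frac{d}{dt}\big(t^{\alpha}x(t)\big)=\alpha t^{\alpha-1}y(t)$. Integrating from $t_0$ to $t$ and dividing by $t^{\alpha}$ yields
\[
x(t)=\frac{t_0^{\alpha}\,x(t_0)}{t^{\alpha}}+\frac{\alpha}{t^{\alpha}}\int_{t_0}^{t}s^{\alpha-1}y(s)\,ds\qquad\forall\,t\ge t_0 .
\]

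Next I would pass to the limit termwise. The first term tends to $0$ since $\alpha>0$. For the second, write $y(s)=L+(y(s)-L)$: because $\frac{\alpha}{t^{\alpha}}\int_{t_0}^{t}s^{\alpha-1}L\,ds=L\cdot\frac{t^{\alpha}-t_0^{\alpha}}{t^{\alpha}}\to L$, it remains to show $\frac{\alpha}{t^{\alpha}}\int_{t_0}^{t}s^{\alpha-1}\big(y(s)-L\big)\,ds\to 0$. Given $\varepsilon>0$, choose $T\ge t_0$ with $\|y(s)-L\|\le\varepsilon$ for all $s\ge T$; splitting the integral at $T$, the head is $O(t^{-\alpha})$ and the tail is bounded in norm by $\varepsilon\,\frac{\alpha}{t^{\alpha}}\int_{T}^{t}s^{\alpha-1}\,ds\le\varepsilon$, so $\limsup_{t\to\infty}\big\|\tfrac{\alpha}{t^{\alpha}}\int_{t_0}^{t}s^{\alpha-1}(y(s)-L)\,ds\big\|\le\varepsilon$. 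Letting $\varepsilon\downarrow 0$ gives $x(t)\to L$.

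I do not expect a genuine obstacle here: this is a routine averaging computation once the integrating factor $t^{\alpha-1}$ is identified. The only points requiring mild care are that all estimates must be performed with the Hilbert-space norm (not with signs), and that the threshold $T$ in the splitting of the averaging integral must be chosen so that both the transient head and the tail are controlled uniformly; since $\alpha>0$ and $\int_{t_0}^{t}s^{\alpha-1}\,ds=(t^{\alpha}-t_0^{\alpha})/\alpha$, these are immediate.
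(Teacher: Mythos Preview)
Your proof is correct and is in fact the standard integrating-factor argument for this lemma. The paper itself does not give a proof but merely cites \cite{APR}; your argument is essentially the one found there, so there is nothing further to compare.
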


Finally, we state a continuous version of Opial's Lemma (see \cite{O}), which is used in the proof of the convergence of the trajectory.

\begin{lemma}\label{O}

Let $S$ be a non-empty subset of a real Hilbert space $H$ and $x: [0, +\infty) \mapsto H$ a given map. Assume that
\begin{itemize}

\item for every $z \in S$, $\lim_{t \to +\infty} \| x(t) - z \|$ exists;

\item every weak sequential cluster point of the map $x$ belongs to $S$.

\end{itemize}

Then $x(t)$ converges weakly to some element of $S$ as $t \to +\infty$.

\end{lemma}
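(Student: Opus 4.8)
The plan is to follow the classical Opial argument. First I would observe that, picking any $\bar z \in S$ (possible since $S\neq\emptyset$), the existence of $\lim_{t\to+\infty}\|x(t)-\bar z\|$ forces $x$ to be bounded on $[0,+\infty)$; since bounded sequences in a Hilbert space admit weakly convergent subsequences, $x$ has at least one weak sequential cluster point, and by the second hypothesis every such point lies in $S$.

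The core step is to show that $x$ has \emph{only one} weak sequential cluster point. Suppose $z_1$ and $z_2$ are two of them; both belong to $S$, so $\lim_{t\to+\infty}\|x(t)-z_1\|$ and $\lim_{t\to+\infty}\|x(t)-z_2\|$ exist. Expanding the squared norms and subtracting gives, for every $t\geq 0$,
\[
\|x(t)-z_1\|^2-\|x(t)-z_2\|^2 \ = \ -2\langle x(t),\, z_1-z_2\rangle + \|z_1\|^2-\|z_2\|^2,
\]
hence $\langle x(t),\,z_1-z_2\rangle$ converges to some $\ell\in\mathbb{R}$ as $t\to+\infty$. Choosing sequences $t_k\to+\infty$ with $x(t_k)\rightharpoonup z_1$ and $s_k\to+\infty$ with $x(s_k)\rightharpoonup z_2$ and passing to the limit in $\langle x(\cdot),\,z_1-z_2\rangle$ along each of them yields $\ell=\langle z_1,\,z_1-z_2\rangle$ and $\ell=\langle z_2,\,z_1-z_2\rangle$; subtracting, $\|z_1-z_2\|^2=0$, so $z_1=z_2$.

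Finally I would promote ``unique weak sequential cluster point'' to ``weak convergence''. If $x(t)$ did not converge weakly to the unique cluster point $\bar x$, there would be $\varphi\in H$, $\varepsilon>0$ and $t_k\to+\infty$ with $|\langle x(t_k)-\bar x,\,\varphi\rangle|\geq\varepsilon$ for all $k$; but $\{x(t_k)\}_k$ is bounded, hence has a weakly convergent subsequence whose limit is a weak sequential cluster point of $x$ and therefore equals $\bar x$, contradicting the inequality. Thus $x(t)\rightharpoonup\bar x\in S$ as $t\to+\infty$. The only mildly delicate point is this last reduction from continuous time to sequences; the manipulation of the cross term $\langle x(t),\,z_1-z_2\rangle$ is the real engine of the proof and is completely routine.
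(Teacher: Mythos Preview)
Your argument is correct and is exactly the classical Opial proof. Note that the paper does not actually prove this lemma: it merely states it in the Appendix with a reference to Opial's original work, so there is no ``paper's own proof'' to compare against; your write-up supplies the standard justification that the paper omits.
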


\end{document}